\newtheorem{lemma}{Lemma}
\newtheorem{defini}{Definition}
\newtheorem{corollary}{Corollary}
\newtheorem{thm}{Theorem}
\newtheorem{remark}{Remark}
\newtheorem{prop}{Proposition}
\title{Recognizable Realizability}
\author{Merlin Carl}
\begin{document}

\maketitle

\begin{abstract}
We introduce a notion of realizability with ordinal Turing machines based on recognizability rather than computability, i.e., the ability to uniquely identify an object. We show that the arising concept of $r$-realizabilty has the property that all axioms of Kripke-Platek set theory are  $r$-realizable and that the set of $r$-realizable statements is closed under intuitionistic provability.
\end{abstract}

\section{Introduction}

Kleene realizability yields a natural interpretation to the intuitive conception that a (number-theoretical) statement is \textit{effectively true}. An extension of Kleene realizability to transfinite computablity was explored in \cite{CGP}, where an application to intuitionistic set theory was provided, another one appearing in \cite{P}. The guiding idea of this this notion of realizability is that, in order for a statement to be effectively true, one needs to be able to \textit{construct} evidence for it. Thus, for example, to realize an statement of the form $\forall{x}\exists{y}\psi(x,y)$, one needs to be able to compute a suitable $y$ for any given $x$. 

A natural epistemic alternative to the requirement of construction would be the ability to uniquely \textit{identify} a witness. This variant is closely related to the so-called \textit{existence property} of intutionistic logic (see, e.g., \cite{Rathjen}), which states that, if there is some $x$ such that $\phi(x)$, then there is a formula $\psi$ such that there is a unique $y$ with $\psi(y)$ and this $y$ also satisfies $\phi(y)$. 

In ordinal computability (\cite{CarlBuch}), the obvious way to model the ability to uniquely identify a witness is \textit{recognizability}: A set $x$ of ordinals is called \textit{recognizable} if and only if there is a program $P$ such that $P^{y}$ halts with output $1$ if and only if $y=x$, while otherwise, it halts with output $0$. This motivates the consideration of realizability notions based on recognizability rather than computability. (This approach was proposed in \cite{Ca2018}, section $7$). 

In this note, we will define such a notion, called \textit{recognizable realizability} or $r$-realizability, and prove that, like the OTM-realizability defined in \cite{CGP}, it has several desirable properties for a notion of constructive set-theoretical truth:

\begin{enumerate}
\item All axioms of KP are $r$-realizable.
\item If a formula $\psi$ follows intuitionistically from a set of formulas $\Phi$, and all elements of $\Phi$ are $r$-realizable, then so is $\psi$. 
\item There are instances of $\forall{x}(\phi(x)\vee\neg\phi(x))$ that are not $r$-realizable.
\item Impredicative instances of the separation axiom are in general not $r$-realizable.
\item The power set axiom is not $r$-realizable.
\end{enumerate}

$r$-realizability thus shares the crucial properties of OTM-realizability \cite{CGP}, \cite{CarlNote} and randomized OTM-realizability \cite{Ca2024}, contributing to the picture that these are stable features of realizability approaches to set theory based on transfinite machine models.

\section{Definition and Basic Techniques}

We will encode arbitrary sets as sets or ordinals in the usual way: For a set $x$, if $f:\alpha\rightarrow\text{tc}(\{x\})$ is a bijection between an ordinal $\alpha$ and the transitive closure of $\{x\}$ satisfying $f(0)=x$, the set $c_{f}:=\{p(\iota,\xi):f(\iota)\in f(\xi)\}$ is the code of $x$ corresponding to $f$, where $p$ denotes Cantor's ordinal pairing function. Following \cite{CGP}, if $c\subseteq\text{On}$ encodes a set, we write $\text{decode}(c)$ for this set. If $x\in\text{tc}(y)$ and $c_{y}\subseteq\text{On}$ is a code for $y$, then there is a natural code for $c_{x}$ that can be derived from $c_{y}$, by letting $\alpha^{\prime}$ be the order type of $f^{-1}[\text{tc}(x)]$, $o:\alpha^{\prime}\rightarrow f^{-1}[\text{tc}(x)]$ the order isomorphism, $f^{\prime}(\xi)=f(o(\xi))$ for $\xi\in f^{-1}[\text{tc}(x)]$ and $c_{x}$ the code for $x$ obtained from $f^{\prime}$. We will refer to this code as the code for $x$ \text{derived} from $c_{y}$ and denote it by $c_{x}^{c_{y}}$. It is easy to see that $c^{c_y}_{x}$ can be OTM-computed uniformly in $c_{y}$ and any code for $x$.

The definition of ordinal Turing machines (OTMs) can be found in Koepke \cite{Koepke}. It is easy to see that OTMs can decide membership and equality between encoded sets, and in fact decide the truth of arbitrary $\in$-sentences when all quantifiers are restricted to a set $x$ relative to any code for $x$ (cf. \cite{Koepke}). We use the term OTMs when ordinal parameters are not allowed (i.e., if computations need to start with nothing besides the input on the tape); otherwise, we speak of parameter-OTMs (pOTMs) (for which one may also initially mark some ordinal as a part of the program). 


When $a$ and $b$ are sets of ordinals, we write $a\oplus b$ for the set $\{2\iota:\iota\in a\}\cup\{2\iota+1:\iota\in b\}$. Conversely, if $a$ is a set of ordinals, we write $(a)_{0}$ and $(a)_{1}$ for the unique sets of ordinals satisfying $a=(a)_{0}\oplus(a)_{1}$. In oracles and computation inputs, we will occasionally write $x,y$ where we really mean $x\oplus y$ to simplify the notation. We also use $\delta_{x,y}$ for the (class) function that is $1$ if and only if $x=y$ and otherwise $0$.



We recall the definitions of (relative) (p)OTM-recognizability from \cite{CSW}, adding a notion of recognizable class functions. We give the definitions for pOTMs, the parameter-free version being obtained from the one given below by setting the parameter to $0$.

\begin{defini}{\label{def recog}}
\begin{enumerate}
\item When $a,b\subseteq\text{On}$, we say that $a$ is pOTM-recognizable relative to $b$, written $a\leq_{r}b$, if and only if there are a pOTM-program $P$ and an ordinal $\rho$ such that, for all $x\subseteq\text{On}$, we have $P^{b\oplus x}(\alpha)\downarrow=\delta_{a,x}$. In this case, we also say that $P^{b}(\alpha)$ recognizes $a$. If we can take $b=0$, we say that $a$ is pOTM-recognizable.
\item A class function $f:\text{On}\rightarrow V$ is pOTM-recognizable if and only if there are an OTM-program $P$ and an ordinal $\rho$ such that, for all $\alpha$, $P(\rho,\alpha)$ recognizes a set $c$ of ordinals such that $\text{decode}(c)=f(\alpha)$. 
\item If we allow arbitrary sets of ordinals, rather than single ordinals, as parameters in the last definition, we obtain strong OTM-recognizability (sOTM-recognizability); that is, $f:\text{On}\rightarrow V$ is sOTM-recognizable if and only if there are an OTM-program $P$, and a set $p\subseteq\text{On}$ such that, for all $\alpha\in\text{On}$, $P^{p}(\alpha)$ recognizes a set $c$ of ordinals with $\text{decode}(c)=f(\alpha)$.
\end{enumerate}
\end{defini}

It is not hard to see that relative recognizability is not transitive (see \cite{CSW}, Lemma 3.16). This is at the same time a technical inconvenience for and a conceptual objection against simply replacing computability with recognizability in the definition of realizability: On the technical side, transitivity is heavily used, e.g. in our proof that the $\in$-induction is $r$-realizable. 
On the conceptual side, recall that the idea was to take objects as epistemically accessible when there is a method to uniquely identify them. But then, there are good reasons to ask for a transitive notion: To repeat an example used in the introduction of \cite{CSW}, I should regard a radioactive stone as recognizable because I can recognize a Geiger counter and then recognize the stone using the Geiger counter, even though I have no direct way of recognizing the stone. We thus follow the main idea of \cite{CSW} and close under relative recognizability:

\begin{defini}
\item Let $x,y\subseteq\text{On}$. We say that $x$ is chain-pOTM-recognizable relative to $y$ if and only if there is a finite sequence $y=x_{0},x_{1},...,x_{k}=x$ of sets of ordinals such that, for each $i\in\{1,...,k\}$, we have that $x_{i-1}$ is recognizable relative to $x_{i}$. 

\end{defini}

We recall the following observation from \cite{CSW} (see \cite{CSW}, Lemma 2.7):

\begin{prop}
Let $x,y\subseteq\text{On}$. Then $x$ is chain-pOTM-recognizable relative to $y$ if and only if there is $z\subseteq\text{On}$ such that $z$ is recognizable relative to $y$ and $x=(z)_{0}$.
\end{prop}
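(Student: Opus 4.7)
The plan is to establish both directions of the equivalence; the interesting direction proceeds by a short induction on chain length.

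For the converse direction $(\Leftarrow)$: given $z\subseteq\text{On}$ with $z\leq_{r}y$ and $x=(z)_{0}$, the three-term sequence $y,z,x$ is a chain witnessing chain-pOTM-recognizability of $x$ relative to $y$. The first step is the hypothesis $z\leq_{r}y$, and the second step holds since a pOTM with oracle $z\oplus u$ can extract $(z)_{0}$ from the $z$-component and halt with $1$ iff $u=(z)_{0}$.

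For the forward direction $(\Rightarrow)$, I would induct on the chain length $k$, with the chain $y=x_{0},\ldots,x_{k}=x$ witnessed by recognizers $(P_{i},\rho_{i})$ at each step. The base case $k=0$ uses $z=\{2\alpha : \alpha\in y\}$, which has $(z)_{0}=y=x$ and is recognizable from $y$ by checking equality against the oracle on even positions and emptiness on the odd positions (a routine OTM computation). For the inductive step, the IH applied to the subchain through $x_{k-1}$ yields $w\leq_{r}y$ with $(w)_{0}=x_{k-1}$. Set $z:=x\oplus w$, so that $(z)_{0}=x$. A recognizer for $z$ from $y$ takes input $z'=u\oplus v$ and performs: (i) verify $v=w$ by invoking the IH-recognizer on oracle $y\oplus v$; (ii) extract $x_{k-1}:=(v)_{0}$ from the (now trusted) $v$; and (iii) test $u=x$ by invoking the step-$k$ recognizer with oracle $x_{k-1}\oplus u$ and parameter $\rho_{k}$. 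Step (iii) pins $u$ down uniquely because the defining property of the step-$k$ recognizer yields $\delta_{x,u}$ once it is applied with its designated oracle, which was correctly obtained in (ii).

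The principal technical obstacle is bookkeeping rather than conceptual: the composite recognizer requires a single pOTM-parameter encoding the finite tuple $(P_{i},\rho_{i})_{1\leq i\leq k}$ together with the IH-parameter. Since each program is a natural number and each $\rho_{i}$ is an ordinal, standard ordinal pairing combines these into one ordinal. Halting of the overall recognizer follows from halting of each subcomputation: the IH-recognizer halts by induction, and $P_{k}$ halts on the relevant oracle by the definition of recognizability. The resulting pOTM-program halts on every input $z'$ and outputs $1$ exactly on $z'=z$, so $z\leq_{r}y$, completing the induction.
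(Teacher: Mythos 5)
Your proof is correct and takes essentially the same approach as the paper, which bundles the entire chain into one recognizable set by setting $z:=x_{k}\oplus\bigoplus_{i=0}^{k-1}x_{i}$ and citing \cite{CSW}, Lemma 2.7; your nested pairing $z=x\oplus w$ is just a recursive reorganization of that interleaving. Your induction supplies the link-by-link verification (and the easy converse) that the paper leaves to the citation.
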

\begin{proof}
    See \cite{CSW}, Lemma 2.7: If $x_{0}x_{1}...x_{k}$ is the chain witnessing the $c$-pOTM-recognizability of $x$ from $y$, let $z:=x_{k}\oplus\bigoplus_{i=0}^{k-1}x_{i}$.
\end{proof}

This motivates the following definition:

\begin{defini}

Let $x,y,\rho\subseteq\text{On}$, and let $P$ be an OTM-program. We say that $P$ (\textit{$c$-recognizes}) $x$ relative to $y$ in the parameter $\rho$ if and only if there exists $z\subseteq\text{On}$ such that $P$ pOTM-recognizes $z$ relative to $y$ in the parameter $\rho$ and $x=(z)_{0}$. In this case, we also say that $x$ is $c$-recognizable from $y$ and write $x\leq_{r}^{c}y$.
\end{defini}

\begin{remark}
\begin{itemize}
\item It is easy to see that recognizability implies $c$-recognizability. In many of the proofs of realizability statements below, we will 
prove recognizability instead of $c$-recognizability and only use $c$-recognizability when it is actually necessary.
\item It is also easy to see that $c$-recognizability is transitive (cf. \cite{CSW}): If $x\leq_{r}^{\text{c}}y\leq_{r}^{\text{c}}z$, then there are $a$, $b$ such that $a\leq_{r}y$, $x=(a){0}$, $b\leq_{r}z$ and $y=(b)_{0}$. But then, it is not hard to see that $x\leq_{r}x\oplus (a\oplus b)\leq_{r}z$ and $x=(x\oplus(a\oplus b))_{0}$, so $x\leq_{r}^{c}z$.
\end{itemize}
\end{remark}

\begin{defini}{\label{rho def}}
For an OTM-program $P$ and a parameter $q$, we denote by $\rho(P,q)$ the set of ordinals recognized by $P$ in the parameter $q$, if it exists; otherwise, $\rho(P,q)$ is undefined. 
Moreover, if $\rho(P,q)$ exists, let us write $\rho_{0}(P,q):=(\rho(P,q))_{0}$ and $\rho_{1}(P,q):=(\rho(P,q))_{1})$ for the first and second component, respectively. Thus, if defined, then $\rho_{0}(P,q)$ is the set of ordinals that is $c$-recognized by 
\end{defini}

We now define $r$-realizability. The definition follows the example of Kleene realizability \cite{Kleene}, which was adapted to a concept of OTM-realizability using computability in \cite{CGP}. We give the definitions for the parameter-case, the parameter-free case being obtained by simply setting the parameter equal to $0$. 

\begin{defini}{\label{def r-realizability}}
Let $\phi$ be an $\in$-formula (possibly using set parameters, which will only be made explicit when necessary), $r$ a set. We say that $r$ $r$-realizes $\phi$, written $r\Vdash_{r}^{p}\phi$, if and only if the following holds:

\begin{enumerate}
\item If $\phi$ is atomic (i.e., of the form $a\in b$ or $a=b$), then $r\Vdash_{r}^{p}\phi$ if and only if $\phi$ is true. 
\item If $\phi$ is of the form $\psi_{0}\wedge\psi_{1}$, then $r$ is an ordered pair $(r_{0},r_{1})$ and we have $r_{0}\Vdash_{r}^{p}\psi_{0}$ and $r_{1}\Vdash_{r}^{p}\psi_{1}$.
\item If $\phi$ is of the form $\psi_{0}\vee\psi_{1}$, then $r$ is an ordered pair $(i,r^{\prime})$, where $i\in\{0,1\}$ and $r^{\prime}\Vdash_{r}^{p}\psi_{i}$. 
\item If $\phi$ is of the form $\neg\psi$, then $r\Vdash_{r}^{p}\psi\rightarrow 0=1$. 
\item If $\phi$ is of the form $\psi_{0}\leftrightarrow\psi_{1}$, then $r\Vdash_{r}^{p}((\psi_{0}\rightarrow\psi_{1})\wedge(\psi_{1}\rightarrow\psi_{0}))$. 
\item If $\phi$ is of the form $\psi_{0}(\vec{p})\rightarrow\psi_{1}(\vec{q})$, then $r$ is an ordered pair $(P,\alpha)$ consisting of a pOTM-program $P$ and a parameter $\alpha$ such that, for every all codes $c_{\vec{p}}$, $c_{\vec{q}}$ for $\vec{p}$ and $\vec{q}$, respectively, and every 
$r^{\prime}$ with $r^{\prime}(\vec{p})\Vdash_{r}^{p}\psi_{0}$, $P^{r^{\prime},\vec{p},\vec{q}}(\alpha)$ $c$-recognizes some $r^{\prime\prime}$ such that $r^{\prime\prime}\Vdash_{r}^{p}\psi_{1}(\vec{q})$ in the parameter $c_{\vec{p}}$ and $c_{\vec{q}}$. 
\item If $\phi$ is of the form $\exists{x}\psi(x,\vec{p})$, then $r$ is an ordered pair $(P,\alpha)$ consisting of an OTM-program $P$ and an ordinal $\alpha$ such that, $P^{\vec{p}}(\alpha)$ c-recognizes an ordered pair $(a,r^{\prime})$ such that $r^{\prime}\Vdash_{r}^{p}\psi(\text{decode}(a),\vec{p})$. 
\item If $\phi$ is of the form $\forall{x}\psi(x,\vec{p})$, then $r$ is an ordered pair $(P,\alpha)$ such that, whenever $a$ is a set and $c$ is a code for $a$, then $P^{c,r^{\prime}}(\alpha)$ c-recognizes a (code for a) set $r^{\prime}$ such that $r^{\prime}\Vdash_{r}^{p}\psi(a,\vec{p})$.
\item If $\phi$ is of the form $\forall{x\in X}\psi(x,\vec{p})$, then $r\Vdash_{r}^{p}\forall{x}(x\in X\rightarrow \psi(x,\vec{p}))$; if $\phi$ is of the form $\exists{x\in X}\psi(x,\vec{p})$, then $r\Vdash_{r}^{p}\exists{x}(x\in X\wedge\psi(x,\vec{p}))$. 
\item If $\phi(x_{1},...,x_{n})$ contains free variables occuring among $x_{1},...,x_{n}$, then an $r$-realizer for $\phi$ is defined to be an $r$-realizer for $\forall{x_{1},...,x_{n}}\phi$.
\end{enumerate}


If, instead of single ordinals, arbitrary sets of ordinals are allowed as parameters in clauses (5)-(7), then we obtain \textit{strong} $r$-realizability, denoted $r\Vdash_{r}^{s}\phi$.\footnote{This is the most direct variant of the realizability notion defined in \cite{CGP} using recognizability instead of computability.}

We say that $\phi$ is $r$-\textit{realizable} with parameters if and only if there is some set $r$ such that $r\Vdash_{r}^{p}\phi$; and similarly without parameters.

If we replace $c$-recognizability with OTM-computability (without parameters, with ordinal parameters, with set parameters) in this definition, we obtain OTM-realizability, the first two versions of were defined in \cite{CarlNote}, while the third was defined and studied in\cite{CGP}); we will denote this as $\Vdash_{\text{OTM}}$, decorated with no exponent, exponent $p$ and exponente $s$, depending on which version we refer to.

\end{defini}

\begin{remark}
In this paper, we will focus mainly on $r$-realizability without parameters and with ordinal parameters; the strong version will for the most part only be mentioned when results about it are covered by the same proofs that work for these variants. Indeed, many of  our results below hold for the version of $r$-realizability with ordinal parameters and with set parameters alike; we will then usually abuse our notation and just speak of $r$-realizability, which is understood to comprise both variants.
\end{remark}

The following lemma is the variant of the ``truth lemma'', see section $3$ of \cite{CGP}, for $r$-realizability; the proof follows the same strategy.

\begin{lemma}{\label{truth lemma}}
\begin{enumerate}
\item There is an OTM-program that, for any true $\Delta_{0}$-formula $\phi$, computes an $r$-realizer of $\phi$. In particular, a $\Delta_{0}$-formula is OTM-realizable if and only if it is $r$-realizable without parameters if and only if it is $r$-realizable with parameters if and only if it is strongly $r$-realizable if and only if it is true.
\item Let $\phi$ be a $\Pi_{1}$-formula. Then $\phi$ is $r$-realizable if and only if it is $r$-realizable with parameters if and only if it is strongly $r$-realizable if and only if it is true.
\item Let $\phi$ be a $\Sigma_{2}$-formula. Then $\phi$ is strongly $r$-realizable if and only if it is true.
\item Let $\phi$ be a $\Pi_{2}$-formula. Then if $\phi$ is (strongly) $r$-realizable, it is true.
\item Let $\phi$ be a $\Pi_{2}$-formula. Then if $\phi$ is $r$-realizable, it is strongly $r$-realizable.
\end{enumerate}
\end{lemma}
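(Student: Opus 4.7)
The plan is to proceed by induction on logical complexity, treating (1) as the base case from which (2)--(5) follow in a largely uniform manner. The engine is the fact recalled in the preliminaries that OTMs decide $\in$-sentences with all quantifiers bounded, uniformly in codes for the parameters, together with the observation that any OTM-computable set is trivially recognizable (compute it and compare with the input), hence $c$-recognizable.

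For (1), I would argue by structural induction on $\Delta_0$ formulas $\phi(\vec{p})$, producing a uniform OTM-program that takes codes for the parameters and outputs a realizer whenever $\phi$ is true. Atomic formulas require only truth, so any set (say $0$) serves as realizer. Conjunctions and disjunctions are handled by pairing realizers inductively (using the $\Delta_0$-decision procedure to select the true disjunct). For negations $\neg\psi$ and implications $\psi_0 \to \psi_1$: if the antecedent is false then by the inductive hypothesis it has no realizer, so the realizer condition is vacuous and any program suffices; if the antecedent is true (and hence the succedent too), output a program that ignores its input and $c$-recognizes a recursively computed realizer of the succedent. Bounded existentials $\exists x \in X\, \psi$ are handled by searching through codes for elements of $X$ derived from the given code of $X$ until a true instance is found; bounded universals by a program that on input a code for $a$ runs the recursive uniform procedure on $\psi(a, \vec{p})$. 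Throughout, $c$-recognizability of outputs follows from OTM-computability implying recognizability, which also yields the asserted coincidence of all variants of $r$-realizability and OTM-realizability at $\Delta_0$.

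Parts (2)--(4) then follow by invoking (1). For (2), if $\forall x\, \psi$ with $\psi \in \Delta_0$ is true, the realizer is a pair $(P, 0)$ where $P$, on input a code for $a$, runs the uniform procedure of (1) on $\psi(a, \vec{p})$; conversely, a realizer of $\forall x\, \psi$ yields for each $a$ a realizer of $\psi(a, \vec{p})$, and by (1) each such $\psi(a, \vec{p})$ is true. For (3), given a true $\exists x\, \sigma(x, \vec{p})$ with $\sigma \in \Pi_1$, fix a witness $a$, obtain a realizer $r$ of $\sigma(a, \vec{p})$ by (2), and use a set parameter encoding $(c_a, r)$ from which the realizing program simply reads them off; conversely, unpacking the existential and applying (2) to the $\Pi_1$-witness gives truth. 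For (4), a realizer of $\forall x \exists y\, \psi$ with $\psi \in \Delta_0$ yields for every $a$ a realizer of $\exists y\, \psi(a, y, \vec{p})$, which contains a witness $b$ such that $\psi(a, b, \vec{p})$ is realized and hence true by (1). Part (5) is essentially immediate from the set-up: an ordinary $r$-realizer uses an ordinal parameter, which is a special case of a set parameter, so it already qualifies as a strong $r$-realizer.

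The main obstacle I expect is the combinatorial bookkeeping around the $c$-recognizability wrapper: realizers of universals and implications must be \emph{programs $c$-recognizing} their outputs, so the uniform procedure of (1) must output not merely a realizer but (an index for) a program that $c$-recognizes that realizer. The simple ``compute and compare'' wrapper handles this at each level, but some care is needed to propagate it through nested quantifiers and to keep track of which codes for parameters are in scope at each stage, in particular the derived-code construction $c_x^{c_y}$ for handling elements of bounded domains. Once (1) is cleanly set up with these wrappers in place, parts (2)--(5) require no essentially new ideas beyond formal unpacking.
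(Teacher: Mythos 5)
Your proposal is correct and follows essentially the same route as the paper: part (1) by structural induction on $\Delta_{0}$-formulas using Koepke's decidability of bounded truth (with the ``compute and compare'' wrapper turning computability into recognizability), and parts (2)--(5) by directly unpacking the quantifier clauses and reducing to (1), with the set parameter carrying the existential witness in (3). The only cosmetic difference is that the paper's bounded-truth program outputs $\emptyset$ as the canonical realizer at the atomic level and diverges on false formulas, but this does not affect the argument.
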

\begin{proof}
\begin{enumerate}
\item (Also cf. \cite{CGP2}.) An easy induction on formulas. 
We describe the functioning of the desired program $P_{\text{bt}}$ (``bounded truth''). For atomic formulas, $P_{\text{bt}}$ first decides whether the formula is true, and if it is, it outputs $\emptyset$; otherwise, it diverges.
The cases of conjunction and disjunction are trivial, and negation is reducible to implication. We now sketch how $P_{\text{bt}}$ works for implications and quantified statements.

Consider $\phi\rightarrow\psi$. Let an $r$-realizer $r$ for $\phi$ be given. By the inductive assumption, this means that $\phi$ is true, and since $\phi\rightarrow\psi$ is true, $\psi$ must be true. Thus, applying $P_{\text{bt}}$ to $\psi$ leads the desired result.

Consider $\exists{x\in X}\psi(x)$. Realizing this (for any of the stated variants) will mean realizing $\exists{x}(x\in X\wedge\psi(x))$. This, in turns, means computing/recognizing some $a$ and then realizing $a\in X\wedge\psi(a)$. Since $a\in X$ is quantifier-free, this is realized if and only if it is true. For $\psi(a)$, we can use the inductive hypothesis. Thus, realizability -- in any variant -- implies truth. On the other hand, if $\exists{x\in X}\psi(x)$ is true, it is realized by the program that, in the parameter $X$, searches through $X$, retrieves the first $a$ for which $\psi(a)$ is true (which can be identified since truth for $\Delta_{0}$-formulas is OTM-decidable by a result of Koepke \cite{Koepke}) and then, inductively computes a realizer for $\psi(a)$. 

Now consider $\forall{x\in X}\psi(x)$. If this statement is true, then, by inductive assumption, for any given $a$, $P_{\text{bt}}$ will compute a realizer for $\psi(a)$. On the other hand, if there is a realizer $(P,\alpha)$ for this statement, then, for any set $a$, we are guaranteed the existence of a realizer for $\psi(a)$, which implies that $\psi(a)$ is true.

\item Let $\phi$ be $\forall{x}\psi$, where $\psi$ is $\Delta_{0}$. 

If $\phi$ is true then, for every $a$, $\psi(a)$ is true and thus $\emptyset\Vdash_{r}\psi(a)$. Let $P_{\emptyset}$ be the program that, on any input, recognizes the empty set. Then $(P,\emptyset)\Vdash\phi$, and similarly with ordinals or sets of ordinals as parameters.

On the other hand, suppose that $(P,a)\Vdash_{r}^{s}\phi$. Then, for any $x$, $P^{a,x}$ $c$-recognizes a realizer $r_{x}$ for $\psi(x)$. In particular, there is such a realizer, which means that $\psi(x)$ is true. Thus, we have $\forall{x}\psi(x)$, i.e., we have $\phi$. Again, the argument is the same for the other two variants of $r$-recognizability. 

\item Let $\phi\equiv\exists{x}\forall{y}\psi(x,y)$, where $\psi$ is $\Delta_{0}$. 

Assume that $\phi$ is true. Pick $a$ for which we have $\forall{y}\psi(a,y)$, and let $c$ be a code for $a$.  Let $Q$ be the program that, in the oracle $x$, recognizes the pair $(x,P_{\emptyset})$. Then $(Q,c)\Vdash_{r}^{s}\phi$. 

On the other hand, if $(Q,a)\Vdash_{r}^{s}\phi$, then, for all $x$, $Q^{a,x}$ recognizes a strong $r$-realizer for $\forall{y}\psi(a,y)$; by (1), this implies that $\forall{y}\psi(a,y)$ is true, and hence, we have $\exists{x}\forall{y}\psi(x,y)$, i.e., $\phi$.

\item Let the statement $\phi$ in question be $\forall{x}\exists{y}\psi$, where $\psi$ is $\Delta_{0}$. A (strong) $r$-realizer of this statement consists of an OTM-program $P$ and a parameter $\rho$ such that, relative to any code $c$ for a set $x$, $(P,\rho)$ $c$-recognizes some $y$ with $\psi(x,y)$. This, in particular, implies the existence of such $y$, so that $\phi$ is true. 
\item It is easy to see that every $r$-realizer of a $\Pi_{2}$-formula $\phi$ is in fact a strong $r$-realizer of $\phi$. 
\end{enumerate}
\end{proof}

We remark that the converse of the second-last statement is in general not true, and that the last statement does not generalize to arbitrary formulas:

\begin{prop}
    Assume that $0^{\sharp}$ exists. Then there is a formula $\phi$ which is $r$-realizable, but not strongly $r$-realizable.
\end{prop}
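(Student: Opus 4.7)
The plan is to take $\phi$ of the form $\neg \psi_0$ for a well-chosen $\psi_0$. By the clauses for negation and implication in Definition~\ref{def r-realizability}, together with the fact that no realizer of any sort exists for $0=1$, one sees that $\neg \psi_0$ is $r$-realizable iff $\psi_0$ has no $r$-realizer, and $\neg \psi_0$ is strongly $r$-realizable iff $\psi_0$ has no strong $r$-realizer. Hence the task reduces to exhibiting a $\psi_0$ that is strongly $r$-realizable but not $r$-realizable.

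To construct such a $\psi_0$ from the hypothesis that $0^\sharp$ exists, I would let $\psi_0 := \exists x \, \Phi(x)$, where $\Phi(x)$ is a parameter-free $\in$-formula whose unique $V$-witness is $0^\sharp$ (for instance, the natural set-theoretic rendering of the $\Pi^1_2$-definition of $0^\sharp$, available because OTMs can decide $L$-truth). A strong $r$-realizer of $\psi_0$ is then obtained by taking $0^\sharp$ itself as the set parameter: the program c-recognizes the pair consisting of a code for $0^\sharp$ (computable from the set parameter) together with a strong $r$-realizer of the true formula $\Phi(0^\sharp)$, the latter obtained by an iteration of the argument for Lemma~\ref{truth lemma}. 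By contrast, any $r$-realizer of $\psi_0$ would have to c-recognize a code for $0^\sharp$ using only an ordinal parameter.

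The main obstacle is to rule out the latter possibility, i.e.\ to show that $0^\sharp$ is not chain-pOTM-recognizable from any ordinal parameter. The intended route is an absoluteness argument: since OTM-computations with ordinal parameters are absolute between $V$ and $L$, a pOTM-c-recognizer uniquely pinning down $0^\sharp$ would yield an $L$-definable uniqueness condition for $0^\sharp$, contradicting $0^\sharp \notin L$. If this direct approach encounters subtleties because candidate oracles in the recognizability computation can themselves be non-constructible, a fallback is to replace $0^\sharp$ by a diagonally constructed set $a$: one lets $\Phi$ $\in$-define $a$ as the $<_L$-least subset of $\omega$ not lying in the class $\{\rho_0(P,\alpha) : P \text{ a pOTM-program}, \alpha \in \text{On}\}$, where the hypothesis ``$0^\sharp$ exists'' is used to ensure this class does not exhaust $\mathcal{P}(\omega)$; the resulting $a$ is sOTM-c-recognizable from itself but, by design, not pOTM-c-recognizable from any ordinal parameter, so the corresponding $\psi_0$ has the required asymmetry.
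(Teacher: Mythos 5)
Your overall strategy coincides with the paper's: take $\phi=\neg\psi_0$ where $\psi_0$ is (a formalization of) ``$0^\sharp$ exists'', note via the implication clause that $\neg\psi_0$ is vacuously $r$-realized by $\emptyset$ once $\psi_0$ has no $r$-realizer, and that $\neg\psi_0$ cannot be strongly $r$-realized once $\psi_0$ is strongly $r$-realizable (else one would obtain a realizer for $0=1$). That reduction is fine and is exactly what the paper does; the paper, however, simply asserts the two sub-claims about ``$0^\sharp$ exists'' without argument, whereas you correctly identify the crux --- that $0^\sharp$ is not $c$-recognizable from ordinal parameters --- and try to prove it. This is where the proposal has a genuine gap.

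The direct absoluteness argument cannot work, essentially for the reason you half-anticipate: absoluteness of $P^{x}(\alpha)$ to $L[x]$ only shows that ``$P^{x}(\alpha)\downarrow=1$'' is a condition, evaluable in $V$, uniquely satisfied by $x=0^\sharp$; this is no contradiction with $0^\sharp\notin L$, since $0^\sharp$ already admits such a unique characterization (its $\Pi^1_2$ definition). In fact the claim you are aiming at is false: it is a central point of \cite{CSW} that $0^\sharp$, if it exists, \emph{is} OTM-recognizable, even without parameters --- an OTM can verify Kanamori's characterization of $0^\sharp$ as the unique real coding a well-founded remarkable cofinal EM blueprint, checking well-foundedness of the term models generated by countably many indiscernibles. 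So no argument along these lines can close the gap (and, one should note, the same issue puts pressure on the paper's own unargued assertion that ``$0^\sharp$ exists'' is not $r$-realizable: chaining the recognizer for $0^\sharp$ with truth-lemma realizers for the defining formula would seem to yield a parameter-free $r$-realizer of $\exists x\,\Phi(x)$). The fallback diagonalization is also not yet a proof: every constructible real is OTM-computable, hence recognizable, from an ordinal parameter, so any real outside $\{\rho_0(P,\alpha):P\text{ a program},\ \alpha\in\text{On}\}$ is non-constructible and has no $<_L$-rank, making ``the $<_L$-least such real'' undefined; you would need a definable well-ordering of a pool of reals provably containing a non-recognizable one, an argument that the recognizable reals do not exhaust that pool, and --- omitted entirely from the sketch --- a strong $r$-realizer for the defining formula $\Phi(a)$, which now asserts a non-recognizability statement quantifying over all programs, ordinal parameters and oracles, a shape not covered by the truth lemma. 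As it stands, the key sub-claim (a sentence strongly $r$-realizable but not $r$-realizable) remains unestablished.
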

\begin{proof}
    Consider the statement ``If $0^{\sharp}$ exists, then $1=0$''. Since ``$0^{\sharp}$ exists'' is not $r$-realizable, this statement is $r$-realized by $\emptyset$. On the other hand, since the existence of $0^{\sharp}$ is strongly $r$-realizable, the implication cannot be.
\end{proof}

\begin{prop}
It is consistent with ZFC that there is a true $\Pi_{2}$-statement $\phi$ which is not $r$-realizable (with or without (set) parameters).




\end{prop}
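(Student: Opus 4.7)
My plan is to invoke Lemma~\ref{truth lemma}(5), which reduces refuting $r$-realizability in all variants at once to refuting strong $r$-realizability, and then to exhibit a true $\Pi_2$-sentence $\phi \equiv \forall x\,\exists y\,\psi(x,y)$ in a suitable ZFC-model that admits no strong $r$-realizer. The conceptual point I would exploit is that a strong $r$-realizer $(P,\rho)$ forces uniformity: the witness $y(x)$ it produces is, for every $x$, uniquely pinned down by a $\Sigma_1$-OTM predicate with parameter $\rho \oplus c_x$, so the class-function $x \mapsto y(x)$ takes values in $L[\rho \oplus c_x]$ for a single fixed $\rho$ shared across all $x$.

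The model I would use is $V = L[G]$, a Cohen-extension of $L$ adding a mutually generic family $G = \langle g_\xi : \xi < \omega_1\rangle$ of subsets of $\omega$. A standard nice-name argument places any candidate realizer-parameter $\rho \in V$ inside $L[G \restriction S]$ for some $S \subseteq \omega_1$ countable in $V$; mutual Cohen-genericity then yields $g_\xi \notin L[\rho \oplus c_\xi]$ for every $\xi \in \omega_1 \setminus S$. I would then look for a $\Pi_2$-sentence $\phi$ whose matrix is $\Delta_0$ in the language of set theory and whose truth in $V$ forces any existential witness at index $\xi$ to encode $g_\xi$. Combining the two observations would rule out any candidate strong $r$-realizer, while the truth of $\phi$ in $V$ would be verified by exhibiting the generic family itself as a Skolem function.

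The main obstacle will be choosing the concrete $\Pi_2$-sentence. Its matrix must be genuinely $\Delta_0$, and $\phi$ must not carry a set parameter that hands the realizer oracle access to $G$, which would trivialise the problem. One natural route is to cast $\phi$ as a combinatorial statement about $\omega$-sequences of reals that, in $L[G]$, can only be witnessed by enumerations of generics — for example a covering assertion about a definable ideal whose witnesses must run through an unrecognizable cofinal family. A secondary technical hurdle is making rigorous the confinement principle that $c$-recognizability from $\rho$ forces the recognized set into $L[\rho]$; I would handle this by unpacking the definition of $c$-recognizability, using the $\Sigma_1$-OTM definability of the recognized set, in the spirit of the Lost-Melody-type arguments in ordinal computability.
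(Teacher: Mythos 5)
There is a genuine gap, and it is fatal to the choice of model. Your reduction to refuting \emph{strong} $r$-realizability via Lemma~\ref{truth lemma}(5) is the right first move, but the model $V=L[G]$ with $G$ a set-sized family of $\omega_1$ mutually generic Cohen reals cannot witness the failure: such a $V$ is of the form $L[A]$ for a single set of ordinals $A\subseteq\omega_1$ coding $G$, and by Lemma~\ref{v=l trivial}(3) strong $r$-realizability there coincides with strong OTM-realizability. Taking $A$ itself as the set parameter, a realizer can enumerate $L[A]=V$, test the $\Delta_0$ matrix against each candidate, and recognize the $<_{L[A]}$-least witness; hence \emph{every} true $\Pi_2$-sentence is strongly $r$-realizable in your model. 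Your nice-name/countable-support step is where this is hidden: ccc-ness confines \emph{reals} to countably many coordinates, but an arbitrary set of ordinals of size $\omega_1$ (such as $A$) can code the whole generic, so the adversary simply chooses $\rho=A$. Any model witnessing the proposition must fail to be of the form $L[x]$ for a set of ordinals $x$ --- e.g.\ a class-generic extension, which is essentially what the paper uses.

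Two further problems would remain even after repairing the model. First, your ``confinement principle'' --- that a set $c$-recognized relative to $\rho\oplus c_x$ lies in $L[\rho\oplus c_x]$ --- is false in general: recognizability makes the recognized set \emph{ordinal definable} from the program and parameters (it is the unique $z$ with $P^{z,y}(\rho)\downarrow=1$), not constructible from them; $0^\sharp$, when it exists, is recognizable but not in $L$. Over a homogeneous extension one can sometimes recover a confinement statement by a HOD computation, but that is a different argument from the $\Sigma_1$/Lost-Melody one you sketch. Second, you never produce the $\Pi_2$-sentence, and the difficulty of tying the witness ``at index $\xi$'' to $g_\xi$ by a parameter-free $\Delta_0$ matrix is precisely the crux, not a side issue. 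The paper sidesteps all of this with a much more elementary argument: it takes $\phi$ to be $\forall x\,\exists y\,(\neg x=\emptyset\rightarrow y\in x)$, observes that a strong $r$-realizer $(P,\vec{p})$ yields a global choice function definable from the set parameter $\vec{p}$, and then takes any model of ZFC with no such definable global choice function. I would recommend adopting that route, or at minimum replacing your forcing by a class-length product so that no single set of ordinals absorbs the generic.
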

\begin{proof}
Consider the statement 
$$\phi:\Leftrightarrow\forall{x}\exists{y}(\neg x=\emptyset\rightarrow y\in x).$$

$\phi$ is obviously $\Pi_{2}$. Note that, by definition, we have $r\Vdash_{r}\neg x=\emptyset$ if and only if $x\neq\emptyset$: for in this case, $x=\emptyset$ has no realizers, making the definition of a realizer for implication vacuously true. 

Suppose that, in some transitive model $M$ of ZFC, there is $r\in M$ such that $r\Vdash_{r}^{s}\phi$; thus, $r=(P,\vec{p})$ for some program $P$ and some $\vec{p}\subseteq\text{On}$. Then, in $M$, we can define the class function $F:M\setminus\{\emptyset\}\rightarrow M$ by letting $F(X)=Y$ for $X\in M\setminus\{\emptyset\}$ if and only if 
$$M\models\exists{Q,\vec{q}\subseteq\text{On}}(P^{Q,\vec{q}}(\vec{p})\downarrow=1\wedge Q^{y}(\vec{q})\downarrow=1).$$ 
Then we must have $F(X)\in X$ for all $X\in M\setminus\{\emptyset\}$, which means that $F$ is a global choice function for $M$. Taking $M$ to be a model of ZFC where there is no global choice function (see, e.g. \cite{Levy}, p. 175f; the existence of such a function is equivalen to $V=$HOD), we see that $\phi$ fails to be $r$-realizable in $M$ and is thus independent of ZFC.

\end{proof}

\begin{remark}
The difference between realizability and truth (in the classical understanding of the term) is a standard feature of realizability notions that is, e.g., also well-known for Kleene realizability.
\end{remark}

We note a few important instances of Lemma \ref{truth lemma}.

\begin{corollary}{\label{truth lemma cases}}
    \begin{enumerate}
        \item For an ordered set $(x,<)$, the statement that $(x,<)$ is well-founded/a well-ordering is true if and only if it is $r$-realizable.
        \item If $x\in\text{On}$, then ``$x$ is a cardinal'' is $r$-realizable if and only if $x$ is a cardinal.
        \item For sets $x$ and $y$, the statement $y=\mathfrak{P}(x)$ is $r$-realizable if and only if $y=\mathfrak{P}(x)$.
    \end{enumerate}
\end{corollary}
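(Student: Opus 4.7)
The plan is to observe that each of the three statements is (equivalent to) a $\Pi_{1}$-formula, so that Lemma \ref{truth lemma}(2) immediately yields the claim: for $\Pi_{1}$-formulas, all variants of $r$-realizability coincide with ordinary truth.

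For (1), I would write well-foundedness of $(x,<)$ in its standard form
\[
\forall S\,\bigl(S\subseteq x\wedge S\neq\emptyset\rightarrow\exists m\in S\,\forall n\in S\,\neg(n<m)\bigr).
\]
Here $S\subseteq x$ abbreviates the $\Delta_{0}$-formula $\forall u\in S\,(u\in x)$, and the remaining quantifiers are bounded by $S$; the formula therefore has the form $\forall S\,\psi$ with $\psi$ bounded in the parameters $x,S,<$, i.e., it is $\Pi_{1}$. Being a well-ordering additionally requires $(x,<)$ to be a linear order, which is $\Delta_{0}$ (trichotomy plus transitivity, with all quantifiers bounded by $x$); the conjunction remains $\Pi_{1}$, and Lemma \ref{truth lemma}(2) applies.

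For (2), I would express ``$x$ is a cardinal'' (for $x\in\text{On}$) as $\forall y\in x\,\forall f\,\neg(f\text{ is a bijection }y\to x)$. The matrix ``$f$ is a bijection $y\to x$'' is $\Delta_{0}$ in the parameters $f,x,y$, since all of its internal quantifiers can be bounded by $f$, $y$, or $x$. Combining the outer universal quantifiers, the statement is $\Pi_{1}$, and Lemma \ref{truth lemma}(2) finishes the case. Similarly, for (3), $y=\mathfrak{P}(x)$ unfolds to $\forall z\,(z\subseteq x\leftrightarrow z\in y)$; since $z\subseteq x$ is $\Delta_{0}$, the matrix is $\Delta_{0}$, the whole statement is $\Pi_{1}$, and one more invocation of Lemma \ref{truth lemma}(2) closes the proof.

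The main ``obstacle'' is really only a careful complexity check: one must ensure in each case that the shorthands (``$z\subseteq x$'', ``$f$ is a bijection'', ``$(x,<)$ is a linear order'') are all genuinely expressible by bounded formulas, so that no hidden unbounded quantifier sneaks into the matrix. Once these routine verifications are in place, the corollary follows directly from the previously established truth lemma with no additional combinatorial input.
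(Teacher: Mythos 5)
Your proposal is correct and follows exactly the paper's argument: each of the three statements is observed to be (equivalent to) a $\Pi_{1}$-formula, after which Lemma \ref{truth lemma}(2) gives the equivalence of truth and $r$-realizability. The additional bookkeeping you do (checking that the shorthands ``$z\subseteq x$'', ``linear order'', ``bijection'' are $\Delta_{0}$) is a harmless elaboration of what the paper leaves implicit.
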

\begin{proof}
    \begin{enumerate}
        \item Both statements are $\Pi_{1}$.
        \item The claim that $x$ is a cardinal can be expressed as ``For all $\alpha<x$ and all $f:\alpha\rightarrow x$, $f$ is not surjective''. As this is a $\Pi_{1}$-statement, the claim follows from Lemma \ref{truth lemma}.
        \item The claim that $y=\mathfrak{P}(x)$ can be expressed as $\forall{z}((z\in y\rightarrow z\subseteq x)\wedge(z\subseteq x\rightarrow z\in y))$. This is again $\Pi_{1}$, so we can apply Lemma \ref{truth lemma}. 
    \end{enumerate}
\end{proof}

If the set-theoretic universe is small, then $r$-realizability coincides with OTM-realizability:

\begin{lemma}{\label{v=l trivial}}
Let $\phi$ be an $\in$-statement. Then:
\begin{enumerate}
\item Suppose that $V=L$. Then $\Vdash_{\text{OTM}}\phi$ if and only if $\Vdash_{r}\phi$. 
\item Suppose that $V=L$. Then $\Vdash_{\text{OTM}}^{p}\phi$ if and only if $\Vdash_{r}^{p}\phi$.
\item Suppose that $V=L[x]$ for some $x\subseteq\text{On}$. Then $\Vdash^{s}_{\text{OTM}}\phi$ if and only if $\Vdash_{r}^{s}\phi$. 
\end{enumerate}
In fact, there is in both cases an OTM-effective method for computing realizers from $r$-realizers and vice versa.
\end{lemma}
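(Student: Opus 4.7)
The plan is to proceed by induction on the structure of $\phi$, handling the three parts of the lemma in parallel. The base case and the cases of $\wedge$, $\vee$, and $\neg$ fall out directly from the definitions: atomic formulas are realized in either sense iff true, and the Boolean clauses amount to pairing realizers produced by the inductive hypothesis, which an OTM can do uniformly. All of the work lies in the clauses for $\rightarrow$, $\forall$, and $\exists$, where OTM-realizability demands actual \emph{computation} of a witness while $r$-realizability only demands $c$-\emph{recognition}.

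The central tool I would isolate first is the following conversion lemma: under $V=L$ (respectively $V=L[p]$ for case (3)), every set of ordinals that is $c$-recognizable from an ordinal parameter $\alpha$ (respectively from $p$) is OTM-computable from an ordinal parameter (respectively from $p$), and the passage from the recognizer to the computer is OTM-effective. To prove this, one uses that OTMs compute the $L$-hierarchy (resp.\ $L[p]$) stage by stage, so given a recognition program $P$ and parameter $\alpha$ one enumerates candidate sets of ordinals level by level, tests each against $P$, and returns the first match; uniqueness of the recognized object ensures correctness, and the hypothesis $V=L$ (resp.\ $V=L[p]$) guarantees the target set appears at some level. The new ordinal parameter to bundle with the computation program is precisely the $L$-rank at which the recognized object first appears, which is itself OTM-computable from the recognizer. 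The reverse direction is immediate: every OTM-computable set is OTM-recognizable (compute it, then compare to the oracle input, which OTMs can do on codes for sets of ordinals), so any OTM-realizer is already an $r$-realizer after a trivial transformation.

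With the conversion lemma in hand the inductive step is routine. For $\exists x\,\psi$, an $r$-realizer $(P,\alpha)$ $c$-recognizes a pair $(a,r')$; apply the conversion lemma to compute this pair explicitly, then recursively transform $r'$ into an OTM-realizer of $\psi(a)$. For $\forall x\,\psi$, on any input code for $a$, apply the conversion lemma to the $c$-recognition of a realizer for $\psi(a)$ and feed the result through the inductive conversion. The case $\rightarrow$ is analogous: given an OTM-realizer of the antecedent, the other direction of the induction turns it into an $r$-realizer, which the given $r$-realizer then transforms into a $c$-recognizer of a realizer of the consequent; apply the conversion lemma and the inductive hypothesis to finish. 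Each of these steps can be carried out by a fixed OTM-program uniformly in the input realizer, yielding the final OTM-effectiveness claim.

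The main obstacle I anticipate is the careful bookkeeping of parameters across the induction. The conversion lemma strictly \emph{increases} the ordinal parameter, because one must incorporate the $L$-rank threshold at which the recognized object appears, so the parameter produced at each stage must be tracked to guarantee it remains an ordinal (cases (1) and (2)) or a set of ordinals in $L[p]$ (case (3)). For the parameter-free case (1), one additionally has to verify that the extra ordinal data produced during conversion at each inductive step can be folded into the program itself, using that for a fixed formula $\phi$ the relevant $L$-rank bounds depend only on the input code and can be recomputed on the fly rather than supplied externally, so that the final realizer is truly parameter-free.
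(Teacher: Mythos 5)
Your proposal is correct and follows essentially the same route as the paper: induction on the structure of $\phi$, with the key step being the ``lost melody''--style conversion of recognizability into computability by enumerating $L$ (resp.\ $L[x]$) and testing each candidate against the recognizing program, and the trivial compute-then-compare conversion in the other direction. Your worry about having to carry the $L$-rank threshold as an extra ordinal parameter is a non-issue (the search simply halts once the recognizer answers positively, so no new parameter is ever needed), but you resolve it correctly yourself, so nothing is missing.
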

\begin{proof}

Both proofs will work very similarly, and so we only elaborate on (2). Let $x$ be a set of ordinals such that $V=L[x]$, and let $P_{\text{enum}}$ be an OTM-program that, in the oracle $y$, successively writes codes for all elements of $L[y]$ to the output tape (which, of course, means that it never halts).\footnote{The existence of such a program is folklore, see, e.g., \cite{CSW}.} The point is now that, using $P_{\text{enum}}$, recognizable elements of $L[x]$ become computable by enumerating $L[x]$, applying the recognizing program to all outputs, and halting once a positive answer occurs; cf., e.g., \cite{LoMe}. 

The proofs will work by induction on $\phi$. For $\Delta_{0}$-formulas, all six notions agree by definition. If $\phi$ is $\psi_{0}\vee\psi_{1}$ or $\psi_{0}\wedge\psi_{1}$, the inductive steps are trivial. 

Let us first consider the implication from right to left: 

Let $\phi$ is $\psi_{0}\rightarrow\psi_{1}$, let $P$ be a program and $a\subseteq\text{On}$ be such that $(P,a)\Vdash_{r}^{s}\phi$. Thus, on input $r$ such that $r\Vdash_{r}^{p}\psi_{0}$, $P^{a,r}$ $c$-recognizes some $r^{\prime}$ with $r^{\prime}\Vdash_{r}^{s}\psi_{1}$. An OTM-realizer works as follows: On input (a code for) $r$, run $P_{\text{enum}}^{x}$. For each output $d$, run $P^{a,r}(d)$. If the output is $0$, continue running $P_{\text{enum}}$; if it is $1$, write $(d)_{0}$ to the output tape and halt. Since a realizer $r^{\prime}$ for $\psi_{1}$ exists in $L[x]$ by assumption, this is guaranteed to halt.

If $\phi$ is $\neg\psi$, recall that this is interpreted as $\psi\rightarrow 1=0$ and apply the last case.

If $\phi$ is $\forall{x}\psi(x)$, the argument is analogous to the implication case. 

Finally, let $\phi$ is $\exists{x}\psi(x)$. Suppose that $P$ is a program and $a\subseteq\text{On}$ is such that $(P,a)\Vdash_{r}^{s}\phi$. This means that $P^{a}$ recognizes a realizer $r$ for $\psi(\text{decode}(a))$. 
 Again, run $P_{\text{enum}}^{a}$ and, for every output $d$, run $P^{a,d}$ until the output is $1$, then write $(d)_{0}$ to the output tape and halt. As in the implication case, this is guaranteed to halt.

The procedure described is clearly implementable on an OTM, proving the last claim.

\bigskip

Now let us consider the implication from left to right. Again, we use induction on formulas. Suppose that $V=L[x]$ for some $x\subseteq\text{On}$ and that $\Vdash^{s}\phi$. Let $r$ be a (strong) OTM-realizer for $\phi$.

Suppose that $\phi$ is $\psi_{0}\rightarrow\psi_{1}$. Thus, relative to any OTM-realizer $r_{0}\Vdash^{s}\psi_{0}$, $r$ computes an OTM-realizer for $\psi_{1}$. To $r$-realize $\phi$, use $r$ (for a given $r_{0}\Vdash^{s}_{r}\psi_{0}$), to compute an OTM-realizer $r_{1}$ for $\psi_{1}$, and then compare it to the oracle. 

Negation and the universal quantification again work in analogy with the implication case.

If $\phi$ is $\exists{x}\psi$ and $r=(a,r^{\prime})\Vdash^{s}\phi$, then $r^{\prime}\Vdash\psi(\text{decode}(a))$. By induction, there is an $r$-realizer $s\Vdash_{r}^{s}\psi(\text{decode}(a))$. Now $(s,a)\Vdash_{r}^{s}\phi$.

Again, the transition from the OTM-realizer to an $r$-realizer is clearly effective.

\end{proof}

An important method for proving negative results on OTM-reducibility (\cite{Ca2018}, Lemma $7$) and OTM-realizability (\cite{CGP}) is the adaptation of Hodge's ``cardinality method'', which he used in \cite{Hodges} to show that certain field constructions are not ``effective'' in a certain sense. This method, along with the proof, also adapts to the present setting:

\begin{lemma}{\label{cardinality bound}} 
Let $x,y\subseteq\text{On}$. 
  Suppose that $x$ is recognizable relative to $y$ in the parameter $\rho\subseteq\alpha\in\text{On}$. Then $\text{card}(\text{tc}(x))\leq\text{card}(\text{tc}(y))+\aleph_{0}+\text{card}(\alpha)$. The same is true if $x$ is $c$-recognizable relative to $y$.
\end{lemma}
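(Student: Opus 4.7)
The plan is to adapt Hodges' cardinality method via a Löwenheim–Skolem together with a Mostowski-collapse argument. Set $\kappa := \text{card}(\text{tc}(y)) + \aleph_{0} + \text{card}(\alpha)$ and argue by contradiction: suppose $P$ and $\rho \subseteq \alpha$ witness the recognizability of $x$ from $y$, so $P^{y\oplus x'}(\rho)\downarrow = \delta_{x,x'}$ for every $x' \subseteq \text{On}$, and suppose toward contradiction that $\text{card}(\text{tc}(x)) > \kappa$.

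First, I would pick a regular cardinal $\theta$ large enough that $x, y, \rho, \alpha \in H_{\theta}$, then apply Löwenheim–Skolem to obtain an elementary submodel $M \prec H_{\theta}$ of cardinality $\kappa$ containing $\text{tc}(y) \cup \alpha \cup \{y, \rho, \alpha, P\}$; this is possible because the latter set has cardinality at most $\kappa$. Let $\pi\colon M \to \bar{M}$ be the Mostowski collapse. The important observation here is that since $\text{tc}(y) \cup \alpha$ sits transitively inside $M$, the collapse fixes each of $y$, $\rho$, $\alpha$, and $P$ (the last being hereditarily finite).

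Next, by elementarity and the truth in $H_{\theta}$ of $\exists x'\,(P^{y\oplus x'}(\rho) \downarrow = 1)$ (witnessed in $V$ by $x' = x$), there is some $x_{M} \in M$ realizing this statement, and its image $\bar{x} := \pi(x_{M}) \in \bar{M}$ realizes it in $\bar{M}$. Halting of an OTM-computation with a designated output is $\Sigma_{1}$-expressible — it asserts the existence of a transfinite sequence of configurations satisfying the absolute OTM-transition rules and terminating in a halting state — so upward absoluteness from the transitive model $\bar{M}$ to $V$ gives $P^{y \oplus \bar{x}}(\rho) \downarrow = 1$ in $V$. Since $\bar{M}$ is transitive with $|\bar{M}| = \kappa$, we have $\text{tc}(\bar{x}) \subseteq \bar{M}$, hence $\text{card}(\text{tc}(\bar{x})) \leq \kappa < \text{card}(\text{tc}(x))$, so $\bar{x} \neq x$ — contradicting recognizability.

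For the $c$-recognizable case, if $x = (z)_{0}$ with $z$ recognizable from $y$ in parameter $\rho$, the previous step applied to $z$ yields $\text{card}(\text{tc}(z)) \leq \kappa$; since $(z)_{0} = \{\iota : 2\iota \in z\}$ satisfies $\sup((z)_{0}) \leq \sup(z)$, the same bound transfers to $\text{tc}(x)$. The main delicate point I expect is the absoluteness step: one must verify carefully that a witnessing halting OTM-computation exhibited inside $\bar{M}$ remains a genuine halting computation in $V$. This rests on the OTM-transition relation being given by absolute set-theoretic operations on configurations, so that a computation sequence coded in $\bar{M}$ is correctly interpreted as a halting computation externally — a point standard in the OTM literature but worth making explicit.
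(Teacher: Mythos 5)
Your proposal is correct and follows essentially the same route as the paper's proof: a Skolem hull of $\mathrm{tc}(y)\cup\alpha\cup\{y,\rho\}$ of the right cardinality, Mostowski collapse fixing $y$ and $\rho$, upward absoluteness of the halting computation, and uniqueness of the recognized set to identify the collapsed witness with $x$. The only cosmetic difference is that you run the argument by contradiction where the paper argues directly that $\pi(x)=x$ and hence $\mathrm{tc}(x)\subseteq\overline{H}$.
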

\begin{proof}
Let $P$ be a program that recognizes $x$ relative to $y$ in the parameter $\rho$. 
Pick an infinite cardinal $\kappa$ large enough so that $\alpha$, $\text{tc}(y)$, $\text{tc}(x)$ belong to H$_{\kappa}$ (the set of sets heriditarily of cardinality $<\kappa$). 
Then all ordinal encodings of $x$ and $y$ will belong to H$_{\kappa}$ as well. 
Form the elementary hull $H$ of $\{\rho\}\cup\alpha+1\cup\{y\}\cup\text{tc}(y)$ in H$_{\kappa}$. 
Clearly, we have $\text{card}(H)\leq\text{card}(\text{tc}(y))+\text{card}(y)+\text{card}(\alpha) \leq \text{card}(y)+\aleph_{0}+\text{card}(\alpha)$. 
Moreover, since $H_{\kappa}\models\exists{u}P^{u,y}(\rho)\downarrow=1$, the same is true of $H$, and since $x$ is unique with this property, we have $x\in H$. 
Form the transitive collapse $\overline{H}$ of $H$, and let $\pi:H\rightarrow\overline{H}$ be the collapsing map. 
Since $\rho\subseteq\alpha+1\subseteq H$ and $\text{tc}(y)\subseteq H$, we have $\pi(\rho)=\rho$ and $\pi(y)=y$. 
By elementarity, we have $\overline{H}\models P^{\pi(x),\pi(y)}(\rho)\downarrow=1$, i.e., $\overline{H}\models P^{\pi(x),y}(\rho)\downarrow=1$. 
By absoluteness of computations, we have $P^{\pi(x),y}(\rho)\downarrow=1$. 
Since $P$ recognizes $x$ in $y$ and $\rho$, we must have $\pi(x)=x$. Thus $x\in\overline{H}$, and as $\overline{H}$ is transitive and by elementarity, we have $\text{tc}(x)\in H$. But now, we have $\text{card}(\text{tc}(x))=\text{card}(\pi(\text{tc}(x))))\leq\text{card}(\overline{H})\leq\text{card}(\text{tc}(y))+\aleph_{0}+\text{card}(\alpha)$, as desired.

Now, if $x$ is $c$-recognizable relative to $y$, pick $z$ such that $z$ is recognizable relative to $y$ with $(z)_{0}=x$. Clearly, this implies $\text{card}(x)\leq\text{card}(z)$, so the required statement follows from the last paragraph. 
\end{proof}

\begin{lemma}{\label{cardinality method}} [Cf. Hodges \cite{Hodges}, S. 144f. and Lemma 3.2]
Suppose that $\phi(x,y)$ is such that, for every cardinal $\kappa$, there is some $x$ with $\text{card}(x)\geq\kappa$ such that, for every $y$ with $\Vdash_{r}\phi(x,y)$, we have $\text{card}(\text{tc}(y))>\text{card}(\text{tc}(x))$. Then $\forall{x}\exists{y}\phi(x,y)$ is not (strongly) $r$-realizable.
\end{lemma}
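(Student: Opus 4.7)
The plan is to argue by contradiction and repeatedly apply the cardinality bound from Lemma \ref{cardinality bound}. Assume $(P,\alpha)\Vdash_{r}\forall{x}\exists{y}\phi(x,y)$. Unfolding Definition \ref{def r-realizability}, for every set $x$ and every code $c$ of $x$, the computation $P^{c}(\alpha)$ $c$-recognizes a set $r$ with $r\Vdash_{r}\exists{y}\phi(x,y)$. Such an $r$ is of the form $(Q,\beta)$ for a program $Q$ and an ordinal $\beta$, such that $Q^{c}(\beta)$ in turn $c$-recognizes an ordered pair $(a,r^{\prime})$, where $a$ is a code for some $y$ and $r^{\prime}\Vdash_{r}\phi(x,y)$. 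So the strategy is: use Lemma \ref{cardinality bound} twice (once for each $c$-recognition) to obtain an upper bound on $\mathrm{card}(\mathrm{tc}(y))$ in terms of $\mathrm{card}(\mathrm{tc}(x))$ and the fixed parameter $\alpha$, and then pick $x$ large enough via the hypothesis on $\phi$ to drown the additive constant.

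Concretely, I would proceed as follows. Fix $\kappa$ to be any infinite cardinal strictly greater than $\mathrm{card}(\alpha)+\aleph_{0}$, and use the hypothesis to pick $x$ with $\mathrm{card}(x)\geq\kappa$ such that any $y$ with $\Vdash_{r}\phi(x,y)$ satisfies $\mathrm{card}(\mathrm{tc}(y))>\mathrm{card}(\mathrm{tc}(x))$. Choose a code $c$ for $x$ with $\mathrm{card}(c)\leq\mathrm{card}(\mathrm{tc}(x))+\aleph_{0}=\mathrm{card}(\mathrm{tc}(x))$ (using the standard encoding recalled at the start of Section 2 and $\mathrm{card}(\mathrm{tc}(x))\geq\kappa\geq\aleph_{0}$). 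By Lemma \ref{cardinality bound} applied to the $c$-recognition of $(Q,\beta)=r$ from $c$ with parameter $\alpha$, we get
\[
\mathrm{card}(\mathrm{tc}(r))\leq\mathrm{card}(\mathrm{tc}(c))+\mathrm{card}(\alpha)+\aleph_{0}\leq\mathrm{card}(\mathrm{tc}(x))+\mathrm{card}(\alpha)+\aleph_{0}.
\]
In particular $\mathrm{card}(\beta)$ is bounded by the same quantity. Applying Lemma \ref{cardinality bound} a second time to the $c$-recognition of $(a,r^{\prime})$ from $c$ with parameter $\beta$ yields
\[
\mathrm{card}(\mathrm{tc}((a,r^{\prime})))\leq\mathrm{card}(\mathrm{tc}(c))+\mathrm{card}(\beta)+\aleph_{0}\leq\mathrm{card}(\mathrm{tc}(x))+\mathrm{card}(\alpha)+\aleph_{0}.
\]
Since $a$ is a code for $y$, we have $\mathrm{card}(\mathrm{tc}(y))\leq\mathrm{card}(a)\leq\mathrm{card}(\mathrm{tc}((a,r^{\prime})))$, so
\[
\mathrm{card}(\mathrm{tc}(y))\leq\mathrm{card}(\mathrm{tc}(x))+\mathrm{card}(\alpha)+\aleph_{0}.
\]
By the choice of $\kappa$ and of $x$, we have $\mathrm{card}(\mathrm{tc}(x))\geq\kappa>\mathrm{card}(\alpha)+\aleph_{0}$, so the right-hand side collapses to $\mathrm{card}(\mathrm{tc}(x))$. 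Hence $\mathrm{card}(\mathrm{tc}(y))\leq\mathrm{card}(\mathrm{tc}(x))$, contradicting the assumed property of $x$. The same argument works for strong $r$-realizability (replace the ordinal parameter $\alpha$ by a set of ordinals $\alpha\subseteq\mathrm{On}$ and use Lemma \ref{cardinality bound} with the corresponding bound on the order type of that parameter).

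The only real subtlety, and the part I would double-check carefully, is the bookkeeping: the $c$-recognized object of the outer call is itself a pair $(Q,\beta)$ that must be fed as the parameter to the inner call, so Lemma \ref{cardinality bound} has to be invoked twice with shifting parameters, and one needs to verify that the relevant code cardinalities are controlled by $\mathrm{card}(\mathrm{tc}(x))$ rather than by something larger. Once $\kappa$ is chosen large relative to $\alpha$, however, every intermediate cardinality is absorbed into $\mathrm{card}(\mathrm{tc}(x))$, so no further delicate estimates are required.
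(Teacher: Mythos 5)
Your proposal is correct and follows essentially the same route as the paper: argue by contradiction, pick $x$ large relative to the parameter $\alpha$, and use Lemma \ref{cardinality bound} to cap $\mathrm{card}(\mathrm{tc}(y))$ by $\mathrm{card}(\mathrm{tc}(x))+\aleph_{0}+\mathrm{card}(\alpha)$. The only cosmetic difference is that the paper compresses the two-level recognition (first of the realizer $(Q,\beta)$ for $\exists y\,\phi(x,y)$, then of the witness pair) into a single appeal to the cardinality bound via transitivity of $c$-recognizability, whereas you apply the bound twice with the shifting parameter $\beta$ — a more explicit but equivalent bookkeeping.
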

\begin{proof}
Suppose for a contradiction that $(P,\alpha)\Vdash_{r}\forall{x}\exists{y}\phi(x,y)$. By the assumption of the statement, pick $x$ such that $\text{card}(x)>\alpha+\aleph_{0}$ and such that, for every $y$ with $\phi(x,y)$, we have $\text{card}(y)>\text{card}(x)$. By the choice of $(P,\alpha)$, there is some $y$ such that $\phi(x,y)$ and some code for $y$ is $c$-recognizable relative to each code for $x$. By Lemma \ref{cardinality bound}, we $\text{card}(x)<\text{card}(y)\leq\text{card}(x)+\aleph_{0}+\text{card}(\alpha)=\text{card}(x)$, a contradiction.
\end{proof}

\begin{corollary}{\label{Pot and cards}}
    \begin{enumerate}
        \item The power set axiom Pot is neither $r$-realizable nor strongly $r$-realizable.
        \item The statement that, for every ordinal, there is a larger cardinal is neither $r$-realizable nor strongly $r$-realizable.
    \end{enumerate}
\end{corollary}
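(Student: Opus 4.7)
The plan is to derive both parts as direct instances of the cardinality method (Lemma \ref{cardinality method}), using Corollary \ref{truth lemma cases} to pin down what a realizer of the existential matrix must witness. In each case the existential matrix is $\Pi_1$, so any $r$-realizer forces the witness to be the genuine mathematical object, and it is this object whose cardinality outgrows that of the input.

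For part (1), I take $\phi(x,y)$ to be the matrix $\forall z(z \in y \leftrightarrow z \subseteq x)$ of the power set axiom; this is $\Pi_1$, so by Corollary \ref{truth lemma cases}(3), any $r$-realizer of $\phi(x,y)$ forces $y = \mathfrak{P}(x)$. Given any cardinal $\kappa$, I pick $x = \kappa$, so that $\text{tc}(x) = \kappa$ and $\text{card}(\text{tc}(x)) = \kappa$. Then the unique $y$ admitting an $r$-realizer of $\phi(\kappa, y)$ is $\mathfrak{P}(\kappa)$, which has cardinality $2^{\kappa} > \kappa$, so $\text{card}(\text{tc}(y)) \geq 2^{\kappa} > \kappa = \text{card}(\text{tc}(x))$. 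The hypothesis of Lemma \ref{cardinality method} is thus met, and the power set axiom $\forall x\, \exists y\, \phi(x,y)$ fails to be (strongly) $r$-realizable.

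For part (2), I take $\phi(\alpha, \kappa)$ to be $\alpha < \kappa \wedge (\kappa \text{ is a cardinal})$. By Corollary \ref{truth lemma cases}(2), the second conjunct is $r$-realizable exactly when $\kappa$ is truly a cardinal, and $\alpha < \kappa$ is $\Delta_0$, so any $r$-realizer of $\phi(\alpha,\kappa)$ guarantees that $\kappa$ is in fact a cardinal strictly above $\alpha$. Given any cardinal $\mu$, I let $x = \mu$, so $\text{card}(\text{tc}(x)) = \mu$. Any realized witness $y = \kappa$ then satisfies $\kappa \geq \mu^{+}$, whence $\text{card}(\text{tc}(\kappa)) = \kappa \geq \mu^{+} > \mu$. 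Again Lemma \ref{cardinality method} applies and yields the desired non-realizability.

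I do not expect a substantive obstacle: once the matrix is recognized as $\Pi_1$ and the truth lemma is invoked to force the witness to be the actual mathematical object, the cardinality blow-up is automatic and the cardinality method delivers the conclusion immediately. The only mild care needed is to check that the transitive closures behave as expected on ordinals (which is trivial, as $\text{tc}(\alpha) = \alpha$ for an ordinal $\alpha$), so that the cardinality count matches what Lemma \ref{cardinality method} demands.
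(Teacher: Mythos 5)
Your proposal is correct and follows essentially the same route as the paper: the paper's one-line proof invokes the condensation/cardinality bound (Lemmas \ref{cardinality bound} and \ref{cardinality method}), and you simply spell out the two applications, using Corollary \ref{truth lemma cases} to force any realized witness to be the genuine power set (resp.\ a genuine larger cardinal) so that the cardinality hypothesis of Lemma \ref{cardinality method} is met. The details you supply (including $\text{tc}(\alpha)=\alpha$ for ordinals) are accurate.
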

\begin{proof}
An easy condensation argument shows that, if $x$ is $c$-recognizable relative to $y$, then the cardinality of $x$ cannot be larger than the cardinality of $y$. This immediately yields both claims.
\end{proof}

Since comprehension and replacement are schemes, rather than single formulas, it needs to be clarified what it means to realize these axioms. As noted in \cite{CGP} (p. 18f), there are two natural interpretations: In the uniform interpretation, there is a single pair $(P,\rho)$ of a program and a parameter such that, for any code $c$ for a set $x$, and parameter $\rho^{\prime}$ and any formula index $n$ (in some natural enumeration $(\phi_{i}:i\in\omega)$ of the $\in$-formulas), $P^{c}(n)$ $c$-recognizes a code for $\{z\in x:\phi(z,\rho^{\prime})\}$; the non-uniform interpretation only requires the existence of such a pair $(P_{\phi,\rho^{\prime}},\rho_{\phi,\rho^{\prime}})$ for each choice of a formula and a parameter separately. Clearly, the uniform variant implies the non-uniform variant. We will now show that even the non-uniform variant fails for comprehension, while the uniform variant is true for replacement.

\begin{lemma}{\label{comprehension}}
\begin{enumerate}
\item The separation scheme is not $r$-realizable. In fact, it already fails for $\Pi_{1}$-formulas (and thus also for $\Sigma_{1}$-formulas) to be $r$-realizable.
\item The separation scheme is not strongly $r$-realizable. In fact, it already fails for $\Pi_{1}$-formulas (and thus also for $\Sigma_{1}$-formula) to be $r$-realizable.
\end{enumerate}
\end{lemma}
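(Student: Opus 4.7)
I aim to show that the non-uniform variant of separation fails already for the $\Pi_{1}$-formula $\phi(\alpha)\equiv$ \emph{``$\alpha$ is a cardinal''} (which is $\Pi_{1}$ by Corollary~\ref{truth lemma cases}(2)), via a L\"owenheim-Skolem condensation argument against a hypothetical $c$-recognizer of the separated set. The preliminary step is to unfold Definition~\ref{def r-realizability} for $\forall x\exists y\forall z(\ldots)$: combining the three clauses and invoking transitivity of $c$-recognizability (the Remark after the definition of $\leq_{r}^{c}$), a hypothetical $r$-realizer $(P,\rho)$ may be replaced by a single pOTM-program $P'$ with parameter $\rho'$ (depending only on $\phi$) such that, for every code $c_{x}$ of every set $x$, $P'^{c_{x}}(\rho')$ $c$-recognizes a code for $\{z\in x:\phi(z)\}$; the separated-set code is extracted from the recognized $\exists$-realizer pair via transitivity applied to the projection onto the first component.

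\textbf{Condensation.} Next I would fix a regular cardinal $\kappa>|\rho'|^{+}$, instantiate at $x:=\kappa$ (so the intended recognized set is $\text{Card}^{V}\cap\kappa$), and form the Skolem hull $H\preceq H_{\kappa^{+}}$ of $(\rho'+1)\cup\{\kappa\}$. Then $|H|=\max(|\rho'|,\aleph_{0})<\kappa$, every ordinal $\leq\rho'$ lies in $H$, and by definability $\omega_{1}\in H$ (together with $\lambda:=|\rho'|^{+}$ in the uncountable-parameter case). Let $\pi\colon H\to\bar H$ be the Mostowski collapse, and set $\bar\kappa:=\pi(\kappa)<\kappa$. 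The crucial properties are that $\pi$ fixes $\rho'$ pointwise, so that $P'$ and $\rho'$ descend unchanged, while $\pi(\omega_{1})=\text{o.t.}(H\cap\omega_{1})$ is strictly greater than $\omega$ (since $H$ contains each of $\omega,\omega+1,\omega+2,\ldots$ by definability) and strictly less than $\omega_{1}$ (since $|H|<\omega_{1}$); hence $\pi(\omega_{1})$ is a countable ordinal $>\omega$ and so not a $V$-cardinal. The analogous statement holds for $\pi(\lambda)$ in the uncountable-parameter case.

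\textbf{Deriving the contradiction.} By elementarity inside $H$ together with absoluteness of OTM-computations, the computation $P'^{\pi(c_{\kappa})}(\rho')$ $c$-recognizes a code for $\pi(\text{Card}^{V}\cap\kappa)$ in $V$. But $\pi(c_{\kappa})$ is a genuine $V$-code for $\bar\kappa$, and the hypothesised $r$-realizer, applied in $V$ at $x=\bar\kappa$, forces the very same computation to $c$-recognize $\text{Card}^{V}\cap\bar\kappa$. Uniqueness of $c$-recognition (the recognized $(z)_{0}$ is determined by the program, parameter and oracle) then yields $\pi(\text{Card}^{V}\cap\kappa)=\text{Card}^{V}\cap\bar\kappa$, which is absurd: $\pi(\omega_{1})$ lies in the left-hand side (it is the $\pi$-image of the $V$-cardinal $\omega_{1}\in H$) but, as noted above, it is absent from the right-hand side. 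Part (2) of the lemma follows by the same argument with $\rho'\subseteq\text{On}$ placed pointwise among the generators of $H$, so that $\pi$ continues to fix $\rho'$.

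\textbf{Main obstacle.} The delicate step is the initial bookkeeping that turns a composite realizer for $\forall x\exists y\forall z(z\in y\leftrightarrow z\in x\wedge\phi(z))$ into a single $c$-recognition of a code for the separated set with a parameter depending only on $\phi$; one must carry the transitivity of $\leq_{r}^{c}$ through the $\exists$-clause, extract the first component of the recognized pair, and check that the biconditional realizer does not smuggle additional information that would break absoluteness. Once this reduction is in hand, the L\"owenheim-Skolem and absoluteness portion is essentially the condensation technique already encapsulated in Lemma~\ref{cardinality bound}.
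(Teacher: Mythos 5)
Your reduction of a hypothetical realizer of the separation instance for ``$x$ is a cardinal'' to a single pair $(P',\rho')$ that, uniformly in $c_{x}$, $c$-recognizes a code for the set of cardinals in $x$ is exactly the first half of the paper's proof (including the use of Corollary~\ref{truth lemma cases} to see that realizability of the $\Pi_{1}$ matrix forces the recognized $y$ to really be $\{z\in x:\phi(z)\}$). You diverge in the endgame: the paper converts the recognizer into a $\Sigma_{1}$-definition of ``being a cardinal'' (namely $\exists{z}(Q^{z}(\gamma)\downarrow=1\wedge\alpha\in(z)_{0})$) and then invokes the standard fact that cardinality is not $\Sigma_{1}$-expressible, proved by an $L_{\beta}$-condensation; you instead run the condensation directly against the recognizer, in the style of Lemma~\ref{cardinality bound}, using uniqueness of the recognized oracle to force $\pi(\text{Card}\cap\kappa)=\text{Card}\cap\bar\kappa$ and exhibiting $\pi(\omega_{1})$ as a non-cardinal member of the left-hand side. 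Both routes work; yours is self-contained and avoids the detour through definability, while the paper's is shorter given that the $\Sigma_{1}$ fact is standard.

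Two points need repair. First, in part (2) the parameter is an arbitrary $\rho'\subseteq\text{On}$, and putting the \emph{elements} of $\rho'$ among the generators only gives $\rho'\subseteq H$, not $\pi\upharpoonright\rho'=\mathrm{id}$: since $\pi(\xi)=\text{o.t.}(H\cap\xi)$, any $\xi\in\rho'$ with $\xi\not\subseteq H$ is moved, and then the collapsed computation runs with parameter $\pi(\rho')\neq\rho'$ and yields no contradiction against the realizer in $V$. You must put all of $\sup(\rho')+1$ into the hull, exactly as Lemma~\ref{cardinality bound} does with its $\alpha+1$; this forces $\text{card}(H)\geq\text{card}(\sup\rho')$, so the witnessing cardinal should be taken as $\text{card}(\sup\rho')^{+}$ (or $\omega_{1}$ when that is countable) rather than $\text{card}(\rho')^{+}$ --- one then checks $\pi(\text{card}(\sup\rho')^{+})\geq\sup\rho'+1>\text{card}(H)$ while its cardinality is $\leq\text{card}(H)$, so it is not a cardinal, and the argument closes. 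Second, the lemma also asserts failure for $\Sigma_{1}$-formulas, and this does not follow formally from the $\Pi_{1}$ case; as in the paper, one needs the extra remark that the set of non-cardinals in $x$ is $\Sigma_{1}$-separable and that a $c$-recognizer for it would yield one for its complement in $x$ by an easy computation, reducing to the case you treated.
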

\begin{proof}
Let $\phi(x)$ be the statement ``$x$ is a cardinal'' (which is expressible as a $\Pi_{1}$-formula stating that every set is not a bijection between $x$ and an element of $x$). Let $C_{\phi}$ be the corresponding instance of the separation scheme, i.e., the statement $\forall{x}\exists{y}((\forall{z\in y}(z\in x\wedge\phi(z))) \wedge \forall{z\in x}(\phi(z)\rightarrow z\in y))$; let us abbreviate the bracketed part by $\psi(x,y)$. Note that $\psi$ is $\Pi_{1}$. We claim that is neither $r$-realizable nor strongly $r$-realizable. Suppose for a contradiction that $r\Vdash_{r}C_{\phi}$. That is, $r$ consists of a pair $(P,\rho)$ of an OTM-program $P$ and a parameter $\rho$ such that, relative to any code $c_x$ for a set $x$, $P^{c_{x}}(\rho)$ $c$-recognizes some $s$ such that $s\Vdash_{r}\exists{y}\psi(x,y)$. This $s$, in turn, will, relative to $c_x$, $c$-recognize a set $y$, along with some $t$ such that $t\Vdash_{r}\psi(x,y)$. Combining the two yields a program which, relative to any given $c_x$, $c$-recognizes a pair $(y,t)$ with $t\Vdash_{r}\psi(x,y)$. Recall from Corollary \ref{truth lemma cases} that $\phi(x)$ being $r$-realizable is equivalent to its being true, i.e., $x$ being a cardinal. Now, if $t\Vdash\psi(x,y)$, it follows that $t=(t_{0},t_{1})$ such that $t_{0}\Vdash_{r}(\forall{z\in y}(z\in x\wedge\phi(z)))$ and $t_{1}\Vdash_{r}\forall{z\in x}(\phi(z)\rightarrow z\in y)$. Further unfolding 
the definitions implies that both parts, and hence $\psi(x,y)$, are actually true; that is, $y$ is actually the set of cardinals in $x$. Taken together, we have obtained that (some code for) the set of cardinals in $x$ is $c$-recognizable relative to $c_x$, uniformly in $c_x$. Let $(Q,\gamma)$ be an OTM-program and a parameter that achieve this. We can thus express the claim that $\alpha$ is a cardinal as $\exists{z}(Q^{z}(\gamma)\downarrow=1\wedge \alpha\in(z)_{0})$, which is a $\Sigma_{1}$-formula. It is, however, well-known that being a cardinal is not expressible as a $\Sigma_{1}$-formula. 
 (To recall the argument, assume for a contradiction that it is, pick an ordinal $\alpha$ sufficiently large so that $\gamma\subseteq\alpha$, and consider the first $\beta>\alpha$ so that $L_{\beta}\models$``There is a largest cardinal $\nu$''. Then, by upwards absoluteness of $\Sigma_1$-statements, $\nu$ would be a cardinal, but it is not hard to see that it is not even a cardinal in $L_{\nu+1}$.)

For $\Sigma_{1}$-formulas, just note that if the set $s$ of non-cardinals in $x$ -- which is $\Sigma_{1}$-definable -- was $c$-recognizable, then, being OTM-computable from $s$ and $x$, so was its complement, contradicting what we just showed.

\end{proof}


\begin{lemma}{\label{replacement realizable}}
The replacement scheme is uniformly $r$-realizable.
\end{lemma}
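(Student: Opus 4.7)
The plan is to exhibit one pair $(P_{\text{repl}},\alpha_{\text{repl}})$ that realizes every instance of replacement uniformly. The program $P_{\text{repl}}$ is to receive a formula index $n$ (for some $\phi$), codes $c_{\vec p}$ for the parameters $\vec p$, a code $c_A$ for a set $A$, and a realizer $r_{\text{prem}} = (P_0,\alpha_0)$ of $\forall x\in A\,\exists y\,\phi(x,y,\vec p)$, and to $c$-recognize an ordered pair $(c_B, r_{\text{concl}})$ in which $c_B$ codes some $B$ and $r_{\text{concl}} \Vdash_r \forall x\in A\,\exists y\in B\,\phi(x,y,\vec p)$.

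First I would unwind $r_{\text{prem}}$: for each $x\in A$, the derived code $c_x^{c_A}$, together with the trivial realizer of the true atomic $x\in A$, drives $r_{\text{prem}}$ through the unfoldings specified by Definition~\ref{def r-realizability} to $c$-recognize a realizer $s_x$ of $\exists y\,\phi(x,y,\vec p)$, which in turn $c$-recognizes a pair $(b_x,t_x)$ with $b_x$ a code for some $y_x$ and $t_x\Vdash_r \phi(x,y_x,\vec p)$. By transitivity of $c$-recognizability (Remark after Definition~\ref{rho def}) the whole chain collapses to a single $c$-recognition, uniform in $c_x^{c_A},c_A,c_{\vec p},r_{\text{prem}}$, of a uniquely determined witness-package $w_x$ that in particular carries $(b_x,t_x)$. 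Setting $B := \{y_x : x\in A\}$ gives a set by replacement in the metatheory.

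Second, I would $c$-recognize a unique object $z_{\text{repl}}$ with $(z_{\text{repl}})_0 = (c_B, r_{\text{concl}})$ and with $(z_{\text{repl}})_1$ storing, in the order fixed by $c_A$, the packages $w_x$ for each $x\in A$. On a candidate $z$, the recognition program (i) iterates over $x\in A$ via $c_A$, extracts the purported package $w_x^*$ from $(z)_1$, and verifies $w_x^* = w_x$ by running the chained recognition procedure from the first step --- a direct equality test, since recognition is binary; (ii) checks that $(z)_0$ is built deterministically from $(z)_1$, namely that $c_B$ is a canonical code for $\{\text{decode}(b_x^*) : x\in A\}$ assembled OTM-uniformly from $c_A$ and the $b_x^*$'s, and that $r_{\text{concl}}$ is the fixed program which, given $c_x^{c_A}$, looks up $(b_x,t_x)$ in its oracle $(z)_1$ and returns the realizer of $\exists y\in B\,\phi(x,y,\vec p)$ formed from $b_x$, the trivial realizer of the true atomic $y_x\in B$, and $t_x$. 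Uniformity is immediate, as $P_{\text{repl}}$ uses $n$ only to interpret $r_{\text{prem}}$.

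The main obstacle is guaranteeing the uniqueness that $c$-recognizability demands: $B$ admits many codes, each $y_x$ admits many codes, and there is no canonical $c_B$ to target directly. The resolution is to exploit the auxiliary coordinate $(z_{\text{repl}})_1$ by recording the witness-packages $w_x$, which are genuinely unique under the recognition semantics, and then letting a deterministic OTM-procedure produce $c_B$ and $r_{\text{concl}}$ as functions of these packages.
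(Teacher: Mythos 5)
Your proposal is correct and follows essentially the same route as the paper: both resolve the non-uniqueness of codes for the image set by $c$-recognizing a pair consisting of (a code for) the image set together with a table of the uniquely determined witness-packages (your $(z_{\text{repl}})_1$ is the paper's set $W$ of verified triples), verifying the table entrywise against the premise realizer and then checking that the image-set code is deterministically assembled from it. Your explicit construction of $r_{\text{concl}}$ as a lookup program over the witness table is a slightly more detailed account of a step the paper leaves implicit, but it is not a different argument.
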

\begin{proof}

Let $\phi$ be an $\in$-formula, $\vec{p}\subseteq\text{On}$ a parameter, $X$ a set coded by $c_{X}\subseteq\text{On}$. Our goal is to, uniformly in $\phi$, $\vec{p}$ and $c_{X}$, $r$-realize the statement $$\forall{x\in X}\exists{y}\phi(x,y)\rightarrow\exists{Y}\forall{x\in X}\exists{y\in Y}\phi(x,y).$$

        Consider some $r$ such that $r\Vdash_{r}^{p}\forall{x\in X}\exists{y}\phi(x,y)$. Hence, $r$ is a pair $(P,\rho)$, where $P$ is an OTM-program and $\rho$ is a parameter such that, for each code $c_{x}$ for some $x\in X$, $P^{c_{x}}(\rho)$ $c$-recognizes a pair $(y(c_{x}),r_{x})$ such that $r_{x}\Vdash_{r}\phi(x,\text{decode}(y(c_{x})))$. 
        Let $Y:=\{y(c_{x}^{c_{X}}):x\in X\}$. We claim that a code for $Y$ is uniformly $c$-recognizable from $\phi$, $\vec{p}$ and $c_{X}$. 
        To this end, let $W:=\{(c_{x}^{c_{X}},c_y,r):x\in X\wedge P^{(c_{x}^{c_{X}},(c_y,r))}(\rho)\downarrow=1\}$. 
        We claim that (some code for) $(Y,W)$ is uniformly recognizable in $\phi$, $\vec{p}$ and $c_{X}$.

        So let a code for a set $Z$ be given in the oracle. We start by checking whether $Z$ codes a set of the form $(A,B)$, where $B$ is a set of triples and $A$ is the set of second components of elements of $B$; if not, we halt with output $0$. Otherwise, it remains to check whether $B=W$. This will be done in two steps.

        In the first step, we run through $X$. For each $x\in X$, we check whether there is precisely one element $(x,a,b)$ with first component $x$ in $W$. If not, we halt with output $0$. Otherwise, we run $P^{(x,(a,b))}(\rho)$. If the output is $0$, we halt with output $0$. Otherwise, we continue with our run through $X$. If this step finishes after considering all elements of $X$, we are guaranteed that $Y\subseteq A$. 

        In the second step, we run through $A$. For each $a\in A$, we check (by running through $W$) whether $W$ contains a triple of the form $(x,(y,r))$ such that $P^{(x,(y,r))}(\rho)\downarrow=1$. If that is the case, then $A\subseteq Y$, and we halt with output $1$.

\end{proof}

\begin{lemma}{\label{ac realizable}}
Let AC be the statement that, for every $x$, if every $y\in X$ has an element, then there is a function $f:X\rightarrow\bigcup{X}$ such that, for all $y\in x$, we have $f(y)\in y$.
Then AC is $r$-realizable.
\end{lemma}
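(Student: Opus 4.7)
My plan is to build a canonical choice function $f$ directly from any code $c_X$ of $X$, exploiting the ordinal indexing that the code provides. If $c_X$ arises from a bijection $h : \alpha \to \text{tc}(\{X\})$ with $h(0) = X$, then for each nonempty $y \in X$ I set $f(y) := h(\min\{\xi < \alpha : h(\xi) \in y\})$, and $f(y) := \emptyset$ in the degenerate case (which is irrelevant when the hypothesis holds). By soundness of $r$-realizability applied to the hypothesis $\forall y \in X\,\exists z(z \in y)$, whenever a realizer of the hypothesis is supplied, every $y \in X$ is genuinely non-empty, so $f$ is a bona fide choice function precisely in the case where we are required to produce a realizer of the conclusion.

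The key technical observation is that a code $c_f$ for $f$ is OTM-computable uniformly from $c_X$: the graph of $f$ is $\Delta_0$-definable from $c_X$, and by Koepke's theorem OTMs can evaluate such definitions. Assembling the derived codes $c_y^{c_X}$ into a code for the set of ordered pairs $\{(y, f(y)) : y \in X\}$ is then routine, so since computability implies $c$-recognizability, $c_f$ is uniformly $c$-recognizable from $c_X$.

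To realize the conclusion $\exists f\,\psi(f, X)$, where $\psi(f, X)$ states that $f$ is a choice function from $X$ into $\bigcup X$, I will output the pair $(c_f, r_\psi)$. Since $\psi(f, X)$ is $\Pi_1$ and, under the hypothesis, true, Lemma~\ref{truth lemma}(2) supplies a canonical uniform $r$-realizer $r_\psi$ (for example, the pair consisting of a program always recognizing $\emptyset$ together with the ordinal $0$). The realizer of the inner implication simply discards its hypothesis-realizer input and returns this existential realizer, and the realizer of the outer $\forall X$ returns this implication realizer independently of $c_X$.

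No step presents a serious obstacle. The hypothesis realizer is used only via its mere existence, i.e., through soundness, to guarantee that the constructed $f$ actually selects elements of the sets in $X$. The only point that merits some care is confirming that the minimal-index selection and the assembly of $c_f$ from the derived codes $c_y^{c_X}$ are genuinely OTM-uniform in $c_X$; this is routine given the coding conventions from Section~2.
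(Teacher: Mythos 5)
Your proof is correct, but it takes a genuinely different route from the paper's. The paper uses the \emph{content} of the hypothesis-realizer: given $r=(P,\rho)\Vdash_{r}\forall y\in X\,\exists z(z\in y)$, it defines $f$ by pairing each $c_{y}^{c_{X}}$ with the witness code that $P^{c_{y}}(\rho)$ recognizes, and then recognizes (a code for) this $f$ relative to $c_{X}$ by re-running $P$ on each candidate value $a(y)$ -- so the resulting $f$ is only recognizable, not computable, and depends on $r$. You instead discard the hypothesis-realizer entirely except for its existence, which (via Lemma~\ref{truth lemma}, since the hypothesis is $\Pi_{2}$, or by directly unfolding the clauses) guarantees that every $y\in X$ is non-empty, and you exploit the well-ordering built into the ordinal code $c_{X}$ to select the least-indexed element of each $y$. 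This makes $c_{f}$ outright OTM-\emph{computable} from $c_{X}$, uniformly, whence recognizable, and the statement ``$f$ is a choice function for $X$'' is a true bounded statement whose realizer is supplied by the truth lemma (it is in fact $\Delta_{0}$ in the parameters $f$, $X$, though your $\Pi_{1}$ reading also suffices). Both arguments are sound; yours is the more elementary one and makes transparent \emph{why} AC is realizable here (codes of sets come equipped with well-orderings), while the paper's version illustrates the general technique of composing recognition procedures, which is the pattern actually needed in places -- such as $\in$-induction -- where the conclusion cannot be manufactured from the parameters alone. The only point you should make explicit is that assembling a code for the set of Kuratowski pairs $\{(y,f(y)):y\in X\}$ from the derived codes is OTM-computable, but this is of the same nature as the routines already used for pairing and union in Lemma~\ref{trivial axioms}.
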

\begin{proof}
We describe an $r$-realizer $r_{\text{AC}}$ for AC. Since AC is an implication, this will consist of an OTM-program -- and, potentially, parameters -- that will compute a realizer for ``there is a choice function for $X$'' whenever it is given an $r$-realizer of ``every $y\in X$ has an element''. 

So pick an $r$-realizer $r$ of the latter statement, and let $c_{X}$ be a code for a set of non-empty sets. $r$ will be a pair $(P,\rho)$ such that, for every code $c_y$ of an element $y\in X$, $P^{c_{y}}(\rho)$ recognizes a code $c_{x}(c_{y})$ for an element $x$ of $y$. Let $f:=\{(c_{y}^{c_{X}},c_{x}(c_{y}^{c_{X}})):y\in X\}$. Clearly, $f$ encodes a choice function for $X$. We claim that (some code for) $f$ is recognizable relative to $c_{X}$. The statement ``$f$ is a choice function for $x$'' is then a true $\Delta_{0}$-statement, which is trivially $r$-realizable. 

We describe how to recognize $f$, given (a code for) $x$, $P$, and $\rho$. Let $a\subseteq\text{On}$ be given in the oracle. First check whether $a$ codes a choice function for $x$; this is possible because it only requires evaluating bounded formulas. Now run through $X$ (using $c_{X}$). For every $y\in X$, compute $c_{y}^{c_{X}}$, then a code $d$ for $a(y)$ and run $P^{c_{y},d}(\rho)$. If the answer is $0$, halt with output $0$. If the answer is $1$, continue with the next element of $x$. Once all elements of $x$ have been considered, halt with output $0$.
\end{proof}

\subsection{Kripke-Platek Set Theory is $r$-realizable}

In this section, we will show that the axioms of Kripke-Platek set theory are $r$-realizable. For (computable)   OTM-realizability, this was demonstrated in \cite{CGP}; most of the arguments are rather straightforward, and resemble those given in \cite{CGP}.

\begin{lemma}{\label{trivial axioms}}
The following KP-axioms are (strongly) $r$-realizable, with and without parameters:
    \begin{enumerate}
        \item Extensionality, i.e., $\forall{x,y}(x=y\leftrightarrow\forall{z}(z\in x\leftrightarrow z\in y))$.
        \item Pairing.
        \item Existence of the empty set.
        \item The axiom of union.
        \item Existence of an infinite set.    
    \end{enumerate}
\end{lemma}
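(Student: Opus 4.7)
The plan is to verify each of the five axioms individually. Extensionality is, after a bit of logical massage, a true statement whose content is essentially $\Pi_{1}$ and so reduces to Lemma \ref{truth lemma}(2). The remaining four axioms each have the shape $\forall\vec{x}\exists z\,\theta(\vec{x},z)$, where $\theta$ is classically $\Pi_{1}$ or $\Delta_{0}$ and the witness $z$ is a canonical set whose code is uniformly OTM-computable from codes for $\vec{x}$. Since OTM-computability of a code immediately yields recognizability, and hence $c$-recognizability, clauses (7) and (8) of Definition \ref{def r-realizability} are dispatched by combining Lemma \ref{truth lemma} with the obvious explicit witness.

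For extensionality I would argue as follows. Given codes $c_x,c_y$, a realizer of the biconditional is a pair of programs, one for each direction. The forward direction $x=y\rightarrow\forall z(z\in x\leftrightarrow z\in y)$ is realized by the program that, on any realizer of the atomic antecedent, outputs the realizer of the $\Pi_{1}$-consequent guaranteed by Lemma \ref{truth lemma}(2); such a realizer exists because if the antecedent is realized then $x=y$ actually holds, making the consequent true in $V$. The converse $\forall z(z\in x\leftrightarrow z\in y)\rightarrow x=y$ is realized by the program that ignores its input and outputs $\emptyset$: any realizer of the $\Pi_{1}$-antecedent witnesses, again by Lemma \ref{truth lemma}(2), that the antecedent holds in $V$, so by actual extensionality $x=y$ holds and $\emptyset$ realizes the atomic conclusion. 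Packaging these into a single pair that is uniformly OTM-computable --- hence recognizable --- in $c_x,c_y$ yields the required realizer of the outer $\forall x,y$.

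For pairing, empty set, union and infinity, the respective witnesses are $\{x,y\}$, $\emptyset$, $\bigcup x$, and $\omega$. In each case a code for the witness is OTM-computable from the input codes (or from no input, for $\emptyset$ and $\omega$) by standard OTM manipulations of set encodings as recalled in Section 2: for $\{x,y\}$ one glues together the codes for $x$ and $y$ using the pairing function; for $\bigcup x$ one enumerates ordinals appearing in some element of $x$ via the code for $x$; and a code for $\omega$ is enumerated directly from the ordinal pairing function. OTM-computability gives recognizability, and the matrix $\theta$ --- namely the defining properties ``$\{x,y\}$ contains exactly $x$ and $y$'', ``$\emptyset$ has no elements'', the bounded biconditional characterizing $\bigcup x$, and the two closure properties of $\omega$ --- is in each case either $\Delta_{0}$ or a true $\Pi_{1}$-statement, hence realized uniformly by Lemma \ref{truth lemma}(1)--(2).

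The main --- and essentially the only --- subtlety lies in extensionality, where the biconditional must be unfolded into two implications because the $\Pi_{1}$-shape is not literally preserved through the realizability clauses for $\leftrightarrow$ and $\rightarrow$, so one has to combine Lemma \ref{truth lemma}(2) with a one-step detour through the actual truth of the antecedent in $V$. For the remaining four axioms the argument is routine once one has verified that the canonical witnesses are genuinely OTM-computable uniformly in their inputs, which is immediate from the encoding conventions introduced in Section 2.
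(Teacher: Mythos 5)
Your proposal is correct and follows essentially the same route as the paper: unfold extensionality into two implications handled via Lemma \ref{truth lemma} together with the truth-equals-realizability clause for atomic formulas, and realize the remaining axioms by observing that codes for the canonical witnesses $\{x,y\}$, $\emptyset$, $\bigcup x$, $\omega$ are OTM-computable and hence recognizable.
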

\begin{proof}
For extensionality, let sets $x,y$ be given. If an $r$-realizer for $x=y$ is given, then, by definition, $x=y$ is true, and hence so is $\forall{z}(z\in x\leftrightarrow z\in y)$, which is $r$-realized by the a program that, regardless of the oracle, recognizes $\emptyset$. On the other hand, if an $r$-realizer for $\forall{z}(z\in x\leftrightarrow z\in y)$ is given, then $\forall{z}(z\in x\leftrightarrow z\in y)$ is true by Lemma \ref{truth lemma}, so $x=y$ is realized by any set, and we can again use a program that recognizes the empty set in any oracle.

For (2)-(5), note that it is easily seen that there are parameter-free OTM-programs for computing (i) a code for $\{x,y\}$ from codes for $x$ and $y$, (ii) a code for $\emptyset$, (iii) a code for $\bigcup{X}$ from a code for $X$, (iv) a code for $\omega$. Since computability implies recognizability, we obtain the desired results.

\end{proof}

\begin{lemma}{\label{separation}} 
    The $\Delta_{0}$-separation scheme is uniformly (strongly) $r$-realizable.
\end{lemma}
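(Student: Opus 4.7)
The plan hinges on two earlier facts: OTMs decide truth of $\Delta_{0}$-formulas when quantifiers are bounded by an encoded set (Koepke's result, quoted after the OTM definition), and true $\Pi_{1}$-statements admit uniformly OTM-computable $r$-realizers by the constructive part of the truth lemma (Lemma \ref{truth lemma}, items (1) and (2)). Since the witness set in every instance of $\Delta_{0}$-separation is in fact OTM-computable from the inputs, $c$-recognizability will fall out a fortiori from plain computability.

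First I would describe a single parameter-free OTM-program $P_{\text{sep}}$ that, given as input a code $c_{X}$ for $X$, a parameter code $c_{\vec p}$, and the index $n$ of a $\Delta_{0}$-formula $\phi(z,\vec p)$ in a fixed enumeration of $\in$-formulas, computes a code $c_{Y}$ for $Y:=\{z\in X:\phi(z,\vec p)\}$. Concretely, $P_{\text{sep}}$ iterates through the derived codes $c_{z}^{c_{X}}$ for $z\in X$ introduced after Definition \ref{def recog}, decides $\phi(z,\vec p)$ by Koepke's result, and assembles the corresponding subcode. All of this is uniform in $\phi$, $\vec p$ and $c_{X}$.

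Next, for each triple $(c_{X},c_{\vec p},n)$ I would assemble a realizer for the resulting instance
\[
\exists Y\,\forall z\bigl(z\in Y\leftrightarrow(z\in X\wedge\phi(z,\vec p))\bigr).
\]
The witness component is the code $c_{Y}$ just computed. For the attached realizer of the universal part, observe that once $Y$ is in place the biconditional $\psi(z)$ is $\Delta_{0}$, hence $\forall z\,\psi(z)$ is a true $\Pi_{1}$-statement; by the proof of Lemma \ref{truth lemma}(2) it is realized by a pair $(Q,\emptyset)$ in which $Q$, on any input code $c_{z}$, calls the bounded-truth program $P_{\text{bt}}$ of Lemma \ref{truth lemma}(1) on the (automatically true) $\Delta_{0}$-instance $\psi(z)$. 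Thus $(c_{Y},(Q,\emptyset))$ is the desired existential realizer, and since it is OTM-computable from $(c_{X},c_{\vec p},n)$, it is in particular $c$-recognizable in the sense required by clauses (6)--(7) of Definition \ref{def r-realizability}.

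Wrapping the outer universal quantifier is then a matter of packaging these subroutines into a single pair $(P_{\text{real}},\emptyset)$ that, on input $(c_{X},c_{\vec p},n)$, carries out the whole construction. The same program witnesses the strong variant, since each subroutine is indifferent to whether the parameters are single ordinals or arbitrary sets of ordinals. I do not foresee any real obstacle; the only point requiring care is uniformity in the formula index $n$, which is accommodated by feeding $\phi$ as an index into the input and invoking the $\Delta_{0}$-truth decider together with $P_{\text{bt}}$ as subroutines.
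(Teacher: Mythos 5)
Your proposal is correct and follows essentially the same route as the paper: the key step in both is that the witness set $Y=\{z\in X:\phi(z,\vec p)\}$ is OTM-computable from $c_{X}$, $\vec p$ and the formula index via Koepke's bounded-truth evaluation, so that recognizability (and hence $c$-recognizability) of a code for $Y$ comes for free, with the remaining $\Pi_{1}$ content of the separation instance handled by the truth lemma. The paper's own proof is just a terser version of the same argument, leaving the realizer-assembly details you spell out implicit.
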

\begin{proof} 
Let $\phi$ be a $\Delta_{0}$-formula,  $\vec{p}\subseteq\text{On}$ be a parameter, $X$ a set coded by $c_{X}\subseteq\text{On}$. 
        We need to show how to, uniformly in $\phi$, $\vec{p}$ and $X$, recognize a code for the set $Y:=\{x\in X:\phi(x,\vec{p}\}$. But it is easy to see that (using the ability of OTMs to evaluate bounded truth predicates, see \cite{Koepke}) a code $c$ for $Y$ is OTM-computable from the given data. Thus, $Y$ is uniformly recognizable in $\phi$, $\vec{p}$ and $X$.  
\end{proof}

\begin{lemma}{\label{kp induction r-realizable}}
The axiom of $\in$-induction, i.e., the statement 

\begin{center}
    $\text{Ind}(\phi):\Leftrightarrow \forall{p}(\forall{a}(\forall{x\in a}\phi(x,p)\rightarrow\phi(a,p))\rightarrow\forall{y}\phi(y,p))$
\end{center}

is $r$-realizable for all formulas $\phi$ (with and without parameters), and in fact uniformly in $\phi$.
\end{lemma}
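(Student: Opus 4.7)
The plan is to unwrap the nested quantifier structure of $\text{Ind}(\phi)$ and then to build the heart of the realizer by an $\in$-recursion that is closed under self-reference via the Kleene recursion theorem for OTMs. An $r$-realizer of $\text{Ind}(\phi)$ is an outer pair $(P_{0},\rho_{0})$ such that, on any code $c_{p}$, $P_{0}^{c_{p}}(\rho_{0})$ $c$-recognizes a pair $(P_{1},\rho_{1})$ realizing the implication $\text{IH}(p)\to\forall{y}\phi(y,p)$, where $\text{IH}(p):=\forall{a}(\forall{x\in a}\phi(x,p)\to\phi(a,p))$. Unfolding clause (6), $(P_{1},\rho_{1})$ takes any realizer $s=(Q,\sigma)$ of $\text{IH}(p)$ and returns a $c$-recognition of a realizer $(P_{2},\rho_{2})$ for $\forall{y}\phi(y,p)$. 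All the real work is in building $P_{2}$; the outer layers are routine wrappers that just package programs and parameters in a uniform way.

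For the construction of $P_{2}$, fix $(Q,\sigma)$ and $c_{p}$. I want a single program $P_{2}$ and a parameter $\rho_{2}$ such that, for every set $y$ with code $c_{y}$, $P_{2}^{c_{y}}(\rho_{2})$ $c$-recognizes some realizer $r(y)$ for $\phi(y,p)$, uniformly in $(c_{y},c_{p},Q,\sigma)$. The idea is to let $R_{y}$ be the realizer for $\forall{x\in y}\phi(x,p)$ given by the rule: on input $c_{x}$, call $P_{2}$ recursively on $c_{x}$ (note that only $c_{x}$ with $x\in y$ imposes a non-vacuous requirement, since $x\in y\to\phi(x,p)$ is realized trivially otherwise, no realizer for the false atomic premise existing). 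Then $r(y)$ is defined as the first component of the set that $Q^{c_{y},R_{y}}(\sigma)$ $c$-recognizes, which is guaranteed to be a realizer of $\phi(y,p)$ by the defining property of $(Q,\sigma)$. To make the self-reference of $P_{2}$ (occurring in the definition of $R_{y}$) rigorous, I invoke the recursion theorem for OTMs and allow $P_{2}$ to read off its own G\"odel number. The recognition procedure of $P_{2}$ on oracle $(c_{y},c_{p},Q,\sigma,w)$ then proceeds as follows: construct $R_{y}$ using the hard-coded G\"odel number, simulate $Q^{c_{y},R_{y}}(\sigma)$ to extract the unique set $z$ that it recognizes, and output $1$ if $w=(z)_{0}$ and $0$ otherwise.

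Correctness of the recursion proceeds by classical $\in$-induction in the metatheory: assuming that, for every $x\in y$, $P_{2}^{c_{x}}(\rho_{2})$ $c$-recognizes a realizer for $\phi(x,p)$, the pair $R_{y}$ is indeed an $r$-realizer of $\forall{x\in y}\phi(x,p)$, so $(Q,\sigma)$ applied to $R_{y}$ produces a $c$-recognition of a realizer for $\phi(y,p)$. The chain of $c$-recognitions (first the recognition performed by $Q$, then the recognitions performed at each recursive call inside $R_{y}$, and then the recursive definitions cascading down the transitive closure of $y$) is combined into a single $c$-recognition using transitivity of $c$-recognizability, which is exactly the reason that transitivity was built into the framework.

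The main obstacle is precisely this self-referential definition: the recursion theorem makes it rigorous, but one must verify that on any specific $y$ the simulation actually terminates and that the resulting recognition procedure is correct; this follows because the computation $Q^{c_{y},R_{y}}(\sigma)$ converges by assumption, and any call into $P_{2}$ triggered during its simulation occurs on some $c_{x}$ with $x\in y$, i.e., strictly lower in the well-founded $\in$-relation. Uniformity in $\phi$ is immediate, since $\phi$ enters the construction only via the realizer $(Q,\sigma)$ supplied to the outer implication and via bounded-truth evaluations of atomic subformulas, both of which a single program can handle given an index for $\phi$ as input.
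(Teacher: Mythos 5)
Your overall architecture (peel off the outer quantifier/implication layers, then handle $\forall y\,\phi(y,p)$ by an $\in$-recursion made self-referential via the recursion theorem) matches the paper's strategy, but there is a genuine gap at exactly the point the paper flags as the ``extra level of complication'' of the recognizability setting. Your $P_{2}$ is supposed to ``simulate $Q^{c_{y},R_{y}}(\sigma)$ to extract the unique set $z$ that it recognizes'' and then compare $w$ with $(z)_{0}$. But $Q$ is a \emph{recognizer}: it takes a candidate in its oracle and answers $0$ or $1$. There is no general way to extract the recognized object by simulation --- recognizable sets of ordinals need not be OTM-computable (the lost melody phenomenon), so this extraction step is not an admissible OTM operation. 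The only thing a recognizer buys you is the ability to \emph{verify} a candidate that is already given; hence the candidate realizer for $\phi(y,p)$, and in fact the candidates for all the intermediate realizers your recursion produces, must be supplied as part of the object being recognized, not conjured during the computation.

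This leads to the second problem: even if you put each level's candidate into an oracle, your recursion descends through $\mathrm{tc}(\{y\})$, which is in general a transfinite well-founded structure, so you obtain a transfinite cascade of relative recognitions. Transitivity of $c$-recognizability, as defined via chains, only composes \emph{finitely} many steps, so your appeal to it to ``combine the chain into a single $c$-recognition'' does not go through. The paper's proof resolves both issues at once by changing what is recognized: instead of a single realizer for $\phi(y,p)$, it $c$-recognizes the entire lookup table $T_{y}=\{(x,(c_{<x},e_{x})):x\in\mathrm{tc}(\{y\})\}$ of realizers along the transitive closure as one set of ordinals, and the recognizing program verifies a candidate table by $\in$-recursion, checking each level against the levels below it (all of which are present in the candidate) and using $r$ only as a checker. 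The desired realizer for $\phi(y,p)$ is then read off from the recognized table. Your self-reference-via-recursion-theorem idea survives in that verification procedure, but the object of recognition has to be the whole table, and the ``extract'' step has to be replaced by ``verify the supplied candidate''.
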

\begin{proof}
The proof is standard and closely follows the example of the one used in \cite{CGP} for strong OTM-realizability (\cite{CGP}, Theorem $40$)). The idea there is the natural one, namely to build up the required realizers by $\in$-recursion on the elements of the transitive closure of the given element $y$. We will follow the same route; however, in the present context, an extra level of complication arises from the fact that the occuring realizers can themselves not be computed, but only recognized, relative to other realizers for which the same is true, all the way down to the empty set. We thus give the full details.

We show how to compute (and thus recognize) from any given formula $\phi$ (encoded in some natural way as a natural number) and any parameter $p$ an $r$-realizer $r_{\phi}$ for $\text{Ind}(\phi)$. 

In other words, relative to $\phi$, $p$, an $r$-realizer $r\Vdash_{r}\forall{a}(\forall{x\in a}\phi(x,p)\rightarrow\phi(a,p))$, and a (code for a) set $y$, we need to $c$-recognize an $r$-realizer $r_{y}\Vdash_{r}\phi(y,p)$. 

From the given code $c_y$ for $y$, it is easy to compute the transitive closure $\text{tc}(y)$ of $y$.\footnote{This, in fact, is trivial by the encoding we use: If the code $c_y$ is derived from the bijection $f:\alpha\rightarrow\text{tc}(\{y\})$, then $c_{y}\cup\{p(\xi,\alpha+1):\xi\in\alpha\}$ is a code for $\text{tc}(y)$.} Let us assume that the elements of $\text{tc}(y)$ are written on a separate tape. Given $c_{y}$, codes for elements of $\text{tc}(y)$ can be derived from it; for the sake of simplicity, we will confuse $z$ and $c_{z}^{c_{y}}$ below. 

We will now recursively associate with each $z\in y$ two realizers $c_{<z}\Vdash_{r}\forall{x\in z}\phi(x,p)$ and $c_{z}\Vdash_{r}\phi(z,p)$. More precisely, we will construct a program $r^{\prime}$ such that, for all $z\in y$, $(r^{\prime})^{z}(0)$ $c$-recognizes $c_{<z}$ while $(r^{\prime})^{z}(1)$ $c$-recognizes $c_{z}$. (Obviously, such a program can be obtained by combining two separate programs for each of these tasks into one.) 
Note that $c_{z}$ is $c$-recognizable from $c_{<z}$, as $c_{z}=\rho_{0}(\rho_{0}(r,z),c_{<z})$: Relative to $z$, $r$ $c$-recognizes an $r$-realizer for $\forall{x\in z}\phi(x,p)\rightarrow\phi(z,p)$, and $c_{<z}$ is an $r$-realizer for $\forall{x\in z}\phi(x,p)$.

We will use the following notation:  Let us write $e_{z}:=\rho(\rho_{0}(r,z),c_{<z})$, for all $z\in\text{tc}(\{y\})$. (Thus, $e_{z}$ is the $r$-realizer $c_{z}$ obtained via $r$ from $r_{<z}$, together with the extra information used in $c$-recognizing it.) 
For $z\in\text{tc}(\{y\})$, we let $\rho_{z}:=\{(x,\rho(r,x)):x\in\text{tc}(\{z\})$. It is easy to see that a code for $\rho_{z}$ is $r$-recognizable relative to $r$, the given encoding of $\text{tc}(\{y\})$, and $z$, for all $z$, by encoding the pairs by interleaving the codes for their components and then combining these to a code for the set of pairs in some canonical manner. This code can then be recognized by checking (i) whether the first components of the elements of the coded set form the set $\text{tc}(\{z\})$ and (ii) whether, for each such pair $(u,v)$, $v=\rho(r,u)$. We can -- and will -- regard $\rho_{y}$ as a ``lookup table'', which, for every $z\in\text{tc}(\{y\})$, will yield an $r$-realizer $\rho_{0}(r,z)\Vdash_{r}\forall{x\in z}\phi(x,p)\rightarrow\phi(z,p)$. Since $\rho_{y}$ is recognizable, we may -- and will -- assume it as given in the following.\footnote{Thus, when we say below that $T_{z}$ is $c$-recognizable relative to $z$, we really mean that $(T_{z},\rho_{y})$ is $c$-recognizable relative to $z$.}
Moreover, let us, for all $z\in\text{tc}(\{y\})$, write $T_{z}:=\{(x,(c_{<x},e_{x})):x\in\text{tc}(\{z\})\}$. 
It is again easy to encode $T_{z}$ 
as a set of ordinals in some natural way, and we will from now on assume that such an encoding has been fixed and slightly abuse notation by using $T_{z}$ 
to refer to this code. 

We will in fact show that $(T_{z},\rho_{z})$ 
is uniformly $c$-recognizable relative to $z$ and $r$. 
Since $(c_{<z},c_{z})$ is easily $r$-recognizable from $T_{z}$, 
this will establish the existence of the desired $r^{\prime}$. Since the recognizability of $\rho_{z}$ is already established, we only need to deal with $T_{z}$; we will show that it is $c$-recognizable relative to $z$ and $r$ by a program $s$.

$s$ will work by $\in$-recursion on the input $z$. 
Let some set $T$ be given in the oracle.  We start by checking whether $T$ encodes a function with domain $\text{tc}(z)$. If not, we halt with output $0$. Otherwise, we do the following for every $x\in\text{tc}(z)$: 

\begin{enumerate}
\item We compute (the encoding for) the restriction $T\upharpoonright\text{tc}(\{x\})$ of $T$ to $\text{tc}(\{x\})$.
\item We run $s^{x}(T\upharpoonright\text{tc}(\{x\})$. 
\item If the output is $0$, we halt with output $0$. Otherwise, we continue with the next $x$. 
\end{enumerate}

Let us denote this subroutine consisting of (1)-(3) by $s^{\prime}$ (which runs uniformly in the input $z$). If it finishes without having produced the output $0$ along the way, we know that, for all elements of $\text{tc}(y)$, $T$ gets the values right; it then only remains to check that it assigns the right value $(c_{<y},e_{y})$ to $y$ itself. So let $T(y)=:(u,v)$. Now, $u$ should just be the OTM-program that does the following: 

Given the input $d$ (the element of $\text{tc}(y)$ for which we want to obtain an $r$-realizer for $\phi(d,p)$) 
and the oracle $(T,t)$, it first runs $s^{\prime}$ on $T$ in the input $y$ to check whether $T=T_{y}\upharpoonright\text{tc}(y)$; if not, we halt with output $0$. If yes, the definition of $T_{y}$ implies that the desired $r$-realizers can now simply be read off from $T$: $T(d)$ will be $(c_{<d},e_{d})$, and $c_{d}$ -- which is by definition an $r$-realizer of $\phi(d,p)$ -- is the second component of $e_{d}$, so we can compute $((T(c))_{1})_{0}$ and check whether this is equal to $t$.

If $u$ is this program,\footnote{Note that this program is a specific natural number and will be the same for all $y$; its inclusion into the object to be recognized serves merely presentational purposes: We find it more natural to think about the routine in this way.} we have $u=c_{<y}$, and we can continue to check whether $v=e_{y}$. This works as follows: Recall that, by definition, $(\rho_{y}(y))\Vdash_{r}\forall{x\in y}\phi(x,p)\rightarrow\phi(y,p)$, and that $\rho_{y}$ is available because $\rho$ is recognizable. Hence, we can run $(\rho_{y}(y))^{c_{<y}}(v)$ to determine whether $v=e_{y}$. 

Now, $r^{\prime}$ work as follows: On input $y$ and in the oracle $(a,b,c)$, it will first check whether $c=\rho$; if not, it halts with output $0$. Otherwise, it uses $c$ to check whether $b=T_{y}$; if not, it halts with output $0$. Otherwise, it finally checks whether $a=((T_{y}(y))_{1})_{0}$; if not, it halts with output $0$, otherwise, it halts with output $1$.







\end{proof}

Taken together, the above Lemmas imply:

\begin{thm}{\label{kp r-realizable}}
    All axioms and axiom schemes of KP are $r$-realizable (with and without parameters).
\end{thm}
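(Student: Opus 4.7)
The plan is to observe that Theorem \ref{kp r-realizable} is a straightforward corollary of the preceding lemmas, so the proof reduces to enumerating the axioms of KP and matching each one to the lemma that establishes its $r$-realizability. First I would fix the formulation of KP as consisting of Extensionality, Pairing, Empty Set, Union, Infinity, $\Delta_{0}$-Separation, $\Delta_{0}$-Collection (as a special case of the Replacement scheme), and the $\in$-Induction scheme.

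The main body of the proof would then be a checklist. Lemma \ref{trivial axioms} handles Extensionality, Pairing, Empty Set, Union, and Infinity in a single sweep, both with and without parameters and in the strong version. Lemma \ref{separation} provides uniform (strong) $r$-realizability of $\Delta_{0}$-Separation. Lemma \ref{replacement realizable} establishes the full Replacement scheme uniformly in $\phi$ and $\vec{p}$, which in particular implies $\Delta_{0}$-Collection. Lemma \ref{kp induction r-realizable} supplies the $\in$-Induction scheme, uniformly in $\phi$ and again with or without parameters.

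I do not expect any real obstacle here: all the substantive work has already been carried out in the individual lemmas, and the genuinely delicate argument (the recursive construction of realizers using the transitivity of $c$-recognizability) lives inside Lemma \ref{kp induction r-realizable}. The only residual task is bookkeeping: one must verify that each cited lemma was proved in the parameter regimes demanded by the theorem statement (parameter-free and with ordinal parameters), and that none of them silently invokes computability where only recognizability is available. A glance at the statements shows that each lemma was formulated in precisely the required generality, so the theorem follows by collecting these results.
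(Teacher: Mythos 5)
Your proposal is correct and matches the paper exactly: the paper states Theorem \ref{kp r-realizable} with no further argument beyond ``Taken together, the above Lemmas imply,'' i.e., it is precisely the bookkeeping corollary of Lemmas \ref{trivial axioms}, \ref{separation}, \ref{replacement realizable} (which covers $\Delta_0$-Collection), and \ref{kp induction r-realizable} that you describe.
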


\section{$r$-realizability and intuitionistic provability}

In this section, we prove a version of the Curry-Howards-correspondence for $r$-realizability; that is, if all elements of a set $\Phi$ of $\in$-sentences are $r$-realizable and $\phi$ is an intuitionistic consequence of $\Phi$, then $\phi$ is also $r$-realizable. With strong OTM-realizability in place of $r$-realizability, this was done in \cite{CGP}.

We begin by noting that for any $\phi$, exactly one of $\phi$ and $\neg\phi$ is $r$-realizable. This is just the usual sense in which the realizability interpretation satisfies the law of excluded middle. We note, though, that this sense is rather weak: Parts (3) and (4) of the next proposition give some more concrete meaning to this ``weakness'' (cf. also \cite{CGP}). 


\begin{defini}
    For an $\in$-sentence $\phi$, let us call the ``atomic negation'' of $\phi$, written $\rightharpoondown\phi$ the formula $\psi$ that -- in classical logic -- arises from $\neg\phi$ by pushing the negation sign inwards until it only appears in front of atomic subformulas.
\end{defini}

\begin{prop}{\label{logical properties}}
\begin{enumerate}
\item For each $\in$-sentence $\phi$, one of $\phi$ and $\neg\phi$ is $r$-realizable.
\item For no $\in$-formula $\phi$ are $\phi$ and $\neg\phi$ both $r$-realizable.    
\item There is an $\in$-sentence $\phi$ such that neither $\phi$ nor $\rightharpoondown\phi$ are $r$-realizable.
\item There is an $\in$-formula $\phi(x)$ such that $\forall{x}(\phi(x)\vee\neg\phi(x))$ is not $r$-realizable.
\end{enumerate}
\end{prop}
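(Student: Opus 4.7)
The plan is to handle the four parts separately; parts (1) and (2) follow quickly by unpacking the realizability definition together with the truth lemma, while (3) and (4) require producing specific witness formulas. For (1), I would observe that if $\phi$ has no $r$-realizer, then $\neg\phi$, interpreted as $\phi\to 0=1$, is vacuously $r$-realized (any pair $(P,0)$ works, since the premise has no realizers); so at least one of $\phi,\neg\phi$ is $r$-realizable. For (2), if $r\Vdash_{r}\phi$ and $(P,\alpha)\Vdash_{r}\neg\phi$, then by unfolding the implication clause $P^{r}(\alpha)$ would have to $c$-recognize an $r$-realizer of $0=1$; by Lemma~\ref{truth lemma}(1) no such realizer exists, since $0=1$ is a false $\Delta_{0}$-atom.

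For (3), I would take $\phi$ to be the power set axiom $\text{Pot}=\forall x\exists y\,(y=\mathfrak{P}(x))$. By Corollary~\ref{Pot and cards}, Pot is not $r$-realizable. Unfolding $y=\mathfrak{P}(x)$ as $\forall z((z\in y\to z\subseteq x)\wedge(z\subseteq x\to z\in y))$ and pushing negations inward puts $\rightharpoondown\text{Pot}$ in the form $\exists x\,\forall y\,\exists z\,\sigma(x,y,z)$ with $\sigma$ a $\Delta_{0}$-matrix. Any $r$-realizer would supply a witness $x$ together with, uniformly in $y$, actual witnesses $z$ making $\sigma(x,y,z)$ true (applying Lemma~\ref{truth lemma}(1) to the $\Delta_{0}$-matrix). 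But for $y=\mathfrak{P}(x)$ no such $z$ exists in ZFC, so $\rightharpoondown\text{Pot}$ is not $r$-realizable either.

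For (4), I would take $\phi(x)$ to be ``$x$ is a cardinal'', which is $\Pi_{1}$. Assume $(P,\alpha)\Vdash_{r}\forall x(\phi(x)\vee\neg\phi(x))$. Then for each code $c_{x}$, $P^{c_{x}}(\alpha)$ $c$-recognizes a pair $(i_{x},r'_{x})$ with $i_{x}\in\{0,1\}$; that is, there is a unique $z_{x}$ with $P^{c_{x}}(\alpha)(z_{x})\downarrow=1$ and $(z_{x})_{0}=(i_{x},r'_{x})$, and by Lemma~\ref{truth lemma} applied to the $\Pi_{1}$-formula $\phi$ we have $i_{x}=0$ exactly when $x$ is a cardinal. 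Because OTM-halting is itself $\Sigma_{1}$ and the extractions of $(z)_{0}$ and of the first component of the coded pair are $\Delta_{0}$, I can then express ``$x$ is a cardinal'' as the $\Sigma_{1}$-formula $\exists z\,\bigl(P^{c_{x}}(\alpha)(z)\downarrow=1\wedge\text{the first component of }(z)_{0}\text{ is }0\bigr)$ with parameters $P,\alpha$. This contradicts the fact, recalled in the proof of Lemma~\ref{comprehension}, that being a cardinal is not $\Sigma_{1}$-definable. The main obstacle is precisely this last step: using the uniqueness built into recognizability so that the $\Sigma_{1}$-existential over candidate $z$'s really picks out the correct pair and admits no spurious accepting witnesses; once that is in place, the cited absoluteness argument closes the proof.
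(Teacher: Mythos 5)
Your proof is correct and follows essentially the same route as the paper's for parts (1), (2) and (4): the vacuous realization of $\neg\phi$, the combination of realizers yielding a realizer of the false $\Delta_{0}$-sentence $0=1$, and the $\Sigma_{1}$-expressibility of cardinality extracted from a putative realizer of $\forall{x}(\phi(x)\vee\neg\phi(x))$ (your attention to the uniqueness of the recognized witness is exactly the point that makes the $\Sigma_{1}$-definition work). The only divergence is in part (3), where the paper takes $\phi$ to be the statement that above every ordinal there is a cardinal (Corollary \ref{Pot and cards}(2)) while you take $\phi=\text{Pot}$ (Corollary \ref{Pot and cards}(1)); both witnesses are drawn from the same corollary and both arguments for the atomic negation run through the same mechanism, namely that a realizer of $\rightharpoondown\phi$ would recognize a witness $x$ together with a realizer of a $\Pi_{2}$-statement about $x$, whose realizability implies its truth by Lemma \ref{truth lemma}, yet which is false for every $x$. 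So the two choices are interchangeable and nothing is gained or lost either way.
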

\begin{proof}
\begin{enumerate}
\item If $\phi$ is not $r$-realizable, i.e., if there is no $r$-realizer for $\phi$, then, by definition, $\phi\rightarrow 1=0$ is $r$-realized by $\emptyset$.
\item     Otherwise, we would have $r$-realizers for $\phi$ and for $\phi\rightarrow1=0$; but then, we could recognize an $r$-realizer for $1=0$, which is quantifier-free and hence $\Delta_{0}$, so by Lemma \ref{truth lemma}, $1=0$ would be true, a contradiction.
\item 
Now, $\rightharpoondown\phi$ states that there exists an ordinal $\alpha$ such that no cardinal is greater than $\alpha$; that is, there is $\alpha$ such that, for all $\beta$, there is a surjection $f:\alpha\rightarrow\beta$. This statement being $r$-realizable would imply the existence of an ordinal $\alpha$ for which the statement ``for all $\beta$, there is a surjection $f:\alpha\rightarrow\beta$'' is $r$-realizable. This statement is $\Pi_{2}$, and hence its $r$-realizability implies its truth. But clearly, it is false.
\item Let $\phi(x)$ be the statement that $x$ is a cardinal. By Corollary \ref{truth lemma cases}(2), $\phi(x)$ is true if and only if $x$ actually is a cardinal. Now, if $\forall{x}(\phi(x)\vee\neg\phi(x))$ was $r$-realizable, say, by the program $P$ in the parameter $\rho$, then we would have $\phi(x)\Leftrightarrow\exists{y}P^{0\oplus y}\downarrow=1$. Thus, the property of being a cardinal would be $\Sigma_{1}$-expressable in some parameter, but as we recalled at the end of the proof of Lemma \ref{comprehension}, it is not.

\end{enumerate}
\end{proof}


We consider Hilbert calculus for intuitionistic provability as given in Aczel and Rathjen \cite{Aczel}, Definition 2.5.1. There, we find the a list of axioms along with three deduction rules. We will now argue that every instance of these axioms is $r$-realizable (for all notions of $r$-realizability considered here) and that $r$-realizability is preserved by the deduction rules.

\begin{lemma}{\label{propositional axioms}}
All propositional instances of the following propositional formulas (see \cite{Aczel}, Def. 2.5.1) are $r$-realizable; that is, when replacing each propositional variable by an $\in$-formula (equal variables by equal variables, of course), then the resulting formula is $r$-realizable. The proofs follow those in \cite{CGP}, with recognizability replacing computability. 

\begin{enumerate}
    \item $\phi\rightarrow(\psi\rightarrow\phi)$
    \item $(\phi\rightarrow(\psi\rightarrow\xi))\rightarrow((\phi\rightarrow\psi)\rightarrow(\phi\rightarrow\xi))$
    \item $\phi\rightarrow(\psi\rightarrow(\phi\wedge\psi))$
    \item $(\phi\wedge\psi)\rightarrow\phi$ and $(\phi\wedge\psi)\rightarrow\psi$
    \item $\phi\rightarrow(\phi\vee\psi)$ and $\psi\rightarrow(\phi\vee\psi)$
    \item $(\phi\vee\psi)\rightarrow((\phi\rightarrow\xi)\rightarrow((\psi\rightarrow\xi)\rightarrow\xi))$
    \item $(\phi\rightarrow\psi)\rightarrow((\phi\rightarrow\neg\psi)\rightarrow\neg\phi)$
    \item $\phi\rightarrow(\neg\phi\rightarrow\psi)$
\end{enumerate}
\end{lemma}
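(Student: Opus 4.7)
For each of the eight axioms we build an explicit $r$-realizer, following the template of the corresponding proof for strong OTM-realizability in \cite{CGP} but replacing each occurrence of ``compute a realizer'' by ``$c$-recognize a realizer''. The main tool is the transitivity of $c$-recognizability, recorded in the Remark following the definition of $c$-recognizability. The straightforward axioms $(1), (3), (4), (5), (6)$ are combinator-like: for $(1)$, given $r \Vdash \phi$ in the oracle, produce the pair $(Q,r)$, where $Q$ is the fixed program ``on any oracle, $c$-recognize the parameter''; for $(3)$, form the ordered pair of the two input realizers; for $(4)$ and $(5)$, project onto a component or inject into a disjunction, both operations being OTM-computable (hence $c$-recognizable); for $(6)$, read the index $i \in \{0,1\}$ from the disjunction-realizer $(i,s)$ and apply the appropriate implication-realizer to $s$.

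Axiom $(2)$, the S-combinator, is the main technical step. Given realizers $r_1 \Vdash \phi \to (\psi \to \xi)$, $r_2 \Vdash \phi \to \psi$, and $r_3 \Vdash \phi$, the intended realizer for $\xi$ arises from the successive compositions: apply $r_2$ to $r_3$ to obtain $r_\psi \Vdash \psi$, apply $r_1$ to $r_3$ to obtain $r_{\psi\xi} \Vdash \psi \to \xi$, then apply $r_{\psi\xi}$ to $r_\psi$. The difficulty is that $r_\psi$ and $r_{\psi\xi}$ are only $c$-recognized from $r_1, r_2, r_3$, not actually present on any tape, so one cannot directly pass them as oracles to subsequent programs. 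The plan is to exploit the first-projection structure of $c$-recognizability: construct a single recognition procedure that accepts candidate bundles $z$ whose first projection is the intended realizer for $\xi$ and whose further projections encode $r_\psi$, $r_{\psi\xi}$ together with the auxiliary data witnessing their $c$-recognition. The verifier then runs the corresponding recognizing subroutines on each segment of the bundle and checks the consistency between segments. This is essentially the transitivity-of-$c$-recognizability construction iterated three times.

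Axioms $(7)$ and $(8)$ formally require producing a realizer for $1=0$ at their innermost level, which does not exist by Lemma \ref{truth lemma}. For axiom $(7)$, the simultaneous existence of $r_1 \Vdash \phi \to \psi$, $r_2 \Vdash \phi \to \neg \psi$, and $r_3 \Vdash \phi$ would yield realizers for both $\psi$ and $\neg \psi$, contradicting Proposition \ref{logical properties}(2); similarly, for axiom $(8)$ the simultaneous existence of realizers for $\phi$ and $\neg \phi$ directly contradicts that proposition. Hence the innermost hypothesis configuration never arises, the innermost specification is vacuously satisfied, and any fixed well-defined program suffices at that level. The main obstacle throughout is thus axiom $(2)$: standard OTM-realizability disposes of the S-combinator by literally computing the intermediate realizers and passing them along, whereas here we must package the entire intermediate computation into a single $c$-recognized bundle to circumvent the fact that $c$-recognized objects cannot in general be placed on a tape.
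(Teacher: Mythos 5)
Your proposal is correct and follows essentially the same route as the paper: the combinator-style realizers for (1) and (3)--(6), the bundling of the intermediate realizers into a single recognized tuple via transitivity of $c$-recognizability for the S-combinator (2), and the appeal to the non-coexistence of realizers for $\psi$ and $\neg\psi$ for (7) and (8). The only cosmetic difference is that for (7) you short-circuit via vacuousness where the paper spells out the full composition down to a (non-existent) realizer of $1=0$, but these amount to the same argument.
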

\begin{proof}
\begin{enumerate}
    \begin{enumerate}
        \item Let $P$ be the program which, on input $r$, recognizes the program that checks whether the input is equal to $r$. 
        \item Let $r$ be an $r$-realizer for $(\phi\rightarrow(\psi\rightarrow\xi))$. We show how to $c$-recognize, relative to $r$, an $r$-realizer for $(\phi\rightarrow\psi)\rightarrow(\phi\rightarrow\xi)$. So let an $r$-realizer $s\Vdash_{r}\phi\rightarrow\psi$ be given; we need to $c$-recognize a realizer for $\phi\rightarrow\xi$. Let $t\Vdash_{r}\phi$ be given. Relative to $t$, $r$ will $c$-recognize an $r$-realizer $t^{\prime}\Vdash_{r}(\psi\rightarrow\xi)$. Relative to $s$, $r$ will $c$-recognize an $r$-realizer $s^{\prime}\Vdash\phi\rightarrow\psi$. Relative to $s^{\prime}$, $t^{\prime}$ will $c$-recognize an $r$-realizer $t^{\prime\prime}\Vdash_{r}\xi$. By transitivity of $c$-recognizability, this works as desired.
More concisely, given $r$, $s$, and $t$, our realizer works by recognizing the triple $(\rho(\rho(\rho_{0}(r,t),\rho_{0}(r,s)),\rho(r,s),\rho(r,t))$, which can be done in the way described, proceeding from right to left.
        \item Let an $r$-realizer $r\Vdash_{r}\phi$ be given. We need to $c$-recognize, relative to $r$, an $r$-realizer for $\psi\rightarrow(\phi\wedge\psi)$. Now, relative to any given $s\Vdash_{r}\psi$, it is easy to recognize $(r,s)$, which $r$-realizes $(\phi\wedge\psi)$.
        \item On input $(r,s)$, one can $c$-recognize both $r$ and $s$.
        \item Let $P$ be the program which, on input $r$, recognizes $(0,r)$ or $(1,r)$, respectively.
        \item We need to $c$-recognize, relative to any given realizer $r\Vdash_{r}(\phi\vee\psi)$, an $r$-realizer for $((\phi\rightarrow\xi)\rightarrow((\psi\rightarrow\xi)\rightarrow\xi))$. Now, $r$ is either $(0,r^{\prime})$ with $r^{\prime}\Vdash_{r}\phi$ or $(1,r^{\prime})$ with $r^{\prime}\Vdash_{r}\psi$. 
        
        In the first case (i.e., if the first component of $r$ is $0$), we recognize the OTM-program $P$ which, for any given realizer $s\Vdash_{r}\phi\rightarrow\xi$, proceeds as follows: Relative to $r^{\prime}$, $s$ will $c$-recognize an $r$-realizer $s^{\prime}$ for $\xi$. To $r$-realize $(\psi\rightarrow\xi)\rightarrow\xi$, one can now ignore the given realizer for $\psi\rightarrow\xi$ and simply recognize $s^{\prime}$. Note that the code for $P$ is easily computable, and thus in particular recognize, from $r$. 

        Otherwise, if the first component of $r$ is $1$ -- so that $r^{\prime}\Vdash_{r}\psi$ -- $P$ will work as follows: We ignore the given $r$-realizer for $(\phi\rightarrow\xi)$; given an $r$-realizer $s\Vdash_{r}(\psi\rightarrow\xi)$, $s$ will recognize an $r$-realizer $t\Vdash_{r}\xi$ relative to $r^{\prime}$, which is what we want.
        \item Given an $r$-realizer $r\Vdash_{r}(\phi\rightarrow\psi)$, we compute -- and hence recognize -- relative to $r$ the OTM-program $P$ that proceeds as follows: Let $s\Vdash_{r}\phi\rightarrow\neg\psi$ be given. We need to $c$-recognize a realizer for $\neg\phi$, i.e., for $\phi\rightarrow 1=0$. This will work as follows: Given an $r$-realizer $t\Vdash_{r}\phi$, use $r$ to $c$-recognize, relative to $t$, an $r$-realizer $t^{\prime}\Vdash_{r}\psi$; moreover, use $s$ to $c$-recognize an $r$-realizer $s^{\prime}\Vdash_{r}\phi\rightarrow\neg\psi$. Now, $s^{\prime}$ will, relative to $t$, $c$-recognize an $r$-realizer $t^{\prime\prime}$ for $\neg\psi$, i.e., for $\psi\rightarrow 1=0$. Finally, $t^{\prime\prime}$ will, relative to $t^{\prime}$, $c$-recognize an $r$-realizer for $1=0$.
        \item Relative to any given $r\Vdash_{r}\phi$, we can recognize the empty set. Now, if $\phi$ is $r$-realizable, then $\neg\phi$ is not by Proposition \ref{logical properties}(2), so $\emptyset\Vdash_{r}\neg\phi\rightarrow\psi$.
    \end{enumerate}
\end{enumerate}
%
%
\end{proof}

\begin{lemma}{\label{first-oder axioms}}
    For an $\in$-formula $\phi$, a variable $x$ and a term $t$, we write $\phi[\frac{t}{x}]$ for the formula obtained from $\phi$ by replacing all free occurences of $x$ in $\phi$ by $t$.
    
    All first-order instances of the following first-order formulas (see \cite{Aczel}, Def. 2.5.1) are $r$-realizable; that is, when replacing each formula variable by an $\in$-formula using the same free variables, the resulting formula will be $r$-realizable.
    \begin{enumerate}
        \item $\forall{x}\phi\rightarrow\phi[\frac{t}{x}]$, where $t$ is free for $x$ in $\phi$
        \item $\phi[\frac{t}{x}]\rightarrow\exists{x}\phi$, where $t$ is free for $x$ in $\phi$
        \item $x=x$
        \item $x=y\rightarrow(\phi[\frac{s}{x}]\rightarrow\phi[\frac{t}{x}])$, where $s$, $t$ are free for $x$ in $\phi$
    \end{enumerate}
\end{lemma}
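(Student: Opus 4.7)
The plan is to handle the four schemes separately, using that in our language the only terms are variables, so each ``$t$'' or ``$s$'' in an instance is a free variable of the instance and its code is already present as a parameter code in any realizer clause that quantifies over the free variables. Combined with transitivity of $c$-recognizability and the fact that OTM-computability implies $c$-recognizability, this reduces all four cases to very short uniform programs, in the same spirit as the proof of Lemma~\ref{propositional axioms}.

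For (1), given an $r$-realizer $r=(P,\rho)$ of $\forall x\,\phi$, my realizer of the implication takes $r$ together with the codes for the free variables (in particular a code $c_t$ for $t$) and $c$-recognizes the output of $P^{c_t}(\rho)$: by the $\forall$-clause of Definition~\ref{def r-realizability}, this output already $c$-recognizes an $r$-realizer of $\phi[t/x]$, which is exactly what the implication clause demands. Dually, for (2), given a realizer $r\Vdash_{r}\phi[t/x]$ and the parameter code $c_t$, my realizer $c$-recognizes the pair $(c_t,r)$, which by the $\exists$-clause is an $r$-realizer of $\exists x\,\phi$; recognizing $(c_t,r)$ relative to $c_t$ and $r$ is trivial.

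For (3), $x=x$ is an atomic truth, so by clause (1) of Definition~\ref{def r-realizability} it is $r$-realized by $\emptyset$; closing under $\forall x$ amounts to the program that recognizes $\emptyset$ on every oracle. For (4), I read the axiom as Leibniz substitution ($s=t\rightarrow(\phi[s/x]\rightarrow\phi[t/x])$): a realizer of the equality exists iff the two sides denote the same set, and in that case $\phi[s/x]$ and $\phi[t/x]$ have literally the same meaning on every valuation, so any realizer of the antecedent already realizes the consequent. The outer implication is thus realized by the program that, on input a realizer of the equality and a realizer $r'$ of $\phi[s/x]$, simply recognizes $r'$.

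The main (and genuinely minor) obstacle is bookkeeping: making sure the codes of the free variables really are available as parameter codes in each application of the $\forall$- and $\exists$-clauses, and that ``recognize what another program recognizes'' is legitimate. Both worries dissolve once one recalls that the parameter list $\vec{p}$ in Definition~\ref{def r-realizability} already carries codes for every free variable of the instance, and that transitivity of $c$-recognizability -- together with OTM-computability implying $c$-recognizability -- permits free composition of the required recognition steps.
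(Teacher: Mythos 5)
Your proposal is correct and follows essentially the same route as the paper: it exploits that the only terms are variables, so the relevant codes are available as (implicitly universally quantified) parameters, applies the universal realizer to $c_t$ for (1), forms the witness pair for (2), and uses that atomic realizability coincides with truth for (3) and (4). The only cosmetic slip is in (2): by the existential clause of Definition~\ref{def r-realizability} the realizer of $\exists x\,\phi$ is not the pair $(c_t,r)$ itself but a (trivially recognizable) program $Q$ that recognizes that pair, together with a parameter, which is exactly how the paper phrases it.
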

\begin{proof}  
Note that, in the $\in$-language, there are neither function nor constant symbols, so that the only terms are variables. 
\begin{enumerate}
    \item Given an $r$-realizer $r\Vdash_{r}\forall{x}\phi$, $r$ will also $r$-realize $\forall{y}\phi$ for any variable $y$ which occurs freely in $\phi[\frac{y}{x}]$ at those occurences where $x$ occurs freely in $\phi$.
    \item Recall that, since we are considering a language with only one relation symbol $\in$ and no other non-logical symbols, $t$ will just be a variable. Thus, an $r$-realizer for $\phi[\frac{t}{x}]\rightarrow\exists{x}\phi$ is the same as an $r$-realizer for $\forall{t}(\phi[\frac{t}{x}]\rightarrow\exists{x}\phi)$. Such an $r$-realizer works as follows: Given a code $c_{t}$ for a set, and then an $r$-realizer $r\Vdash_{r}\phi[\frac{t}{x}]$, it is easy to recognize an OTM-program $Q$ that recognizes the pair $(c_{t},r)$ (the input just needs to be compared to the oracle). Thus, $(Q,\rho)\Vdash_{r}\exists{x}\phi$. 
    
    \item As a true atomic formula, this, by definition of $\Vdash_{r}$, realized by any, say $\emptyset$. 
    \item Let codes $c_{x}$, $c_{y}$ for sets be given. Given an $r$-realizer $r\Vdash_{r}x=y$, an $r$-realizer for $\phi[\frac{s}{x}]\rightarrow\phi[\frac{t}{x}]$ just an OTM-program which, on input $s\Vdash_{r}\phi[\frac{s}{x}]$, recognizes $s$. Since $x=y$ is atomic, its $r$-realizability implies its truth, so that the sets coded by $c_{x}$ and $c_{y}$ will actually be the same. Hence, any $r$-realizer for $\phi[\frac{s}{x}]$ will also be one for $\phi[\frac{t}{x}]$.
\end{enumerate}
\end{proof}

\begin{lemma}{\label{deduction rules}}
Assume that $\Phi$ is set of $\in$-formulas such that all elements of $\Phi$ are $r$-realizable. 
Assume further that $\phi$ follows from $\Phi$ by one of the following deduction rules (cf. \cite{Aczel}, Def. 2.5.1). Then $\phi$ is $r$-realizable.
\begin{enumerate}
    \item $\{\phi,\phi\rightarrow\psi\}\vdash\psi$ 
    \item $\{\psi\rightarrow\phi[\frac{x}{y}]\}\vdash(\psi\rightarrow\forall{x}\phi)$, where $y$ is free for $x$ in $\phi$ and has no free occurences in $\phi$, $\psi$.
    \item $\{\phi[\frac{x}{y}]\rightarrow\psi\}\vdash(\exists{x}\phi\rightarrow\psi)$,  where $y$ is free for $x$ in $\phi$ and has no free occurences in $\phi$, $\psi$.
\end{enumerate}
\end{lemma}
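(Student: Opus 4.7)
The overall strategy will be to build, in each case, an $r$-realizer of the conclusion by composing the recognizers supplied by the premises, systematically invoking the transitivity of $c$-recognizability established in the Remark following the definition of $c$-recognizability. Since $r$-realizability means mere existence of a realizer, I will not need a canonical construction; it will suffice to chain together the $c$-recognitions guaranteed by the hypotheses and to package them as a realizer of the right syntactic shape according to Definition \ref{def r-realizability}. Throughout, free variables other than the ones explicitly displayed in the rule schemata will be handled by clause (10) of that definition, which reduces realizability of an open formula to realizability of its universal closure.

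For rule (1), assume $r_{0}\Vdash_{r}\phi$ and $(P,\rho)\Vdash_{r}\phi\rightarrow\psi$. By the implication clause, $P^{r_{0}}(\rho)$ already $c$-recognizes some $r_{1}$ with $r_{1}\Vdash_{r}\psi$; in particular such an $r_{1}$ exists, so $\psi$ is $r$-realizable. No further work will be required.

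For rule (2), suppose $(P,\rho)\Vdash_{r}\psi\rightarrow\phi[\tfrac{y}{x}]$, which, taking the universal closure in $y$, unfolds to: for every code $c_{a}$ for a set $a$, $P^{c_{a}}(\rho)$ $c$-recognizes some $(Q_{a},\alpha_{a})$ with $(Q_{a},\alpha_{a})\Vdash_{r}\psi\rightarrow\phi(a)$. I will construct $(S,\sigma)\Vdash_{r}\psi\rightarrow\forall x\,\phi$ as follows: the parameter $\sigma$ encodes $P$ and $\rho$; given any $r\Vdash_{r}\psi$ in the oracle, $S^{r}(\sigma)$ must $c$-recognize a realizer $(R,\beta)$ of $\forall x\,\phi$. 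Here $R$ will be the program that, on code $c_{a}$, first uses $P^{c_{a}}(\rho)$ to $c$-recognize $(Q_{a},\alpha_{a})$, then feeds $r$ and $\alpha_{a}$ into $Q_{a}$ to $c$-recognize a realizer for $\phi(a)$; the legitimacy of this two-step procedure is exactly what transitivity of $c$-recognizability buys us. The parameter $\beta$ must carry the data $P,\rho,r$; this is immediate in the strong (set-parameter) version, and for the ordinal-parameter variant one either absorbs $r$ into the oracle slot alongside $c_{a}$ via a pairing convention, or invokes the Remark that our arguments are uniform across both variants.

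For rule (3), the construction is dual. Suppose $(P,\rho)\Vdash_{r}\phi[\tfrac{y}{x}]\rightarrow\psi$, i.e.\ for every code $c_{a}$ for $a$, $P^{c_{a}}(\rho)$ $c$-recognizes some $(P_{a},\rho_{a})\Vdash_{r}\phi(a)\rightarrow\psi$. Given $r=(Q,\alpha)\Vdash_{r}\exists x\,\phi$, the existential clause hands us a $c$-recognition by $Q(\alpha)$ of a pair $(a,r^{\prime})$ with $r^{\prime}\Vdash_{r}\phi(a)$. Chaining: from $r$ we first $c$-recognize $(a,r^{\prime})$; from $a$ and the fixed $(P,\rho)$ we then $c$-recognize $(P_{a},\rho_{a})$; and finally $P_{a}^{r^{\prime}}(\rho_{a})$ $c$-recognizes a realizer of $\psi$. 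Transitivity collapses this chain into a single program and parameter $(S,\sigma)$ such that, on oracle $r$, $S^{r}(\sigma)$ $c$-recognizes a realizer of $\psi$; this is the desired $r$-realizer of $\exists x\,\phi\rightarrow\psi$.

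The main obstacle I expect is the bookkeeping in rule (2): the recognized object $(R,\beta)$ must depend on the ambient realizer $r$ of $\psi$, yet $\beta$ is formally an ordinal in the parameter-OTM version, while $r$ is in general an arbitrary set of ordinals. The simplest resolution is to carry out the argument explicitly in the strong version and then cite the blanket Remark that the same reasoning adapts to the ordinal-parameter and parameter-free versions; alternatively, one can use the pair $(c_{a},r)$ as the oracle of $R$ under the paper's convention of tupling via $\oplus$. Once this bookkeeping is in place, rules (1) and (3) are essentially immediate applications of transitivity of $c$-recognizability.
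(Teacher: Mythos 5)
Your treatments of rules (1) and (3) match the paper's argument in substance: rule (1) is immediate from the implication clause, and for rule (3) you chain the $c$-recognitions exactly as the paper does (you are in fact slightly more faithful to Definition \ref{def r-realizability} than the paper, which treats the realizer of $\exists{x}\phi$ as the witnessing pair itself rather than as a program--parameter pair that $c$-recognizes it). The problem is rule (2), where the ``bookkeeping obstacle'' you flag at the end is a genuine gap and neither of your proposed fixes closes it. The object $(R,\beta)$ that $S^{r}(\sigma)$ recognizes must be a bona fide realizer of $\forall{x}\phi$ on its own, i.e.\ it must work for every code $c_{a}$ \emph{after} it has been recognized; but your $R$ needs the given realizer $r$ of $\psi$ to run, and in the ordinal-parameter and parameter-free versions there is nowhere legitimate to store $r$: $\beta$ is an ordinal, the oracle of $R$ is fixed by clause (8) of Definition \ref{def r-realizability} to consist of $c_{a}$ and the recognition candidate, and the auxiliary component of a $c$-recognition is not available to the recognized realizer afterwards. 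Retreating to the strong version and ``citing the Remark'' does not help either: the Remark does not assert that arguments transfer from set parameters to ordinal parameters, and such transfer fails in general --- the paper's $0^{\sharp}$ example exhibits a formula that is $r$-realizable but not strongly $r$-realizable, so the two notions genuinely diverge.

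The paper closes rule (2) by a much shorter route that you did not take, even though you cite the relevant clause in your plan: by clause (10) of Definition \ref{def r-realizability}, an $r$-realizer of the \emph{open} formula $\phi[\frac{x}{y}]$ is by definition an $r$-realizer of its universal closure, i.e.\ of $\forall{x}\phi$ up to renaming the bound variable. Hence the realizer of the premise, applied to a given $r\Vdash_{r}\psi$, already $c$-recognizes a realizer of $\forall{x}\phi$; no new program $R$ has to be assembled and no data has to be smuggled into a parameter, so the construction goes through uniformly in all three parameter regimes. As written, your proof of rule (2) establishes the claim only for strong $r$-realizability; to cover the version of the lemma actually asserted, you should replace your construction for rule (2) by this appeal to the free-variable clause.
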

\begin{proof}
For the sake of simplicity, we pick a realizer $r_{\phi}$ for each $\phi\in\Phi$. In fact, a realizer of $\phi$ can be uniformly OTM-computed from the realizers for the formulas involved in the deduction in each case. Since the proofs work the same whether parameters are allowed to be ordinals, sets of ordinals, or not at all, we suppress all mentioning of parameters.
    \begin{enumerate}
        \item $r_{\phi\rightarrow\psi}$ will be a program $P$ that recognizes a realizer for $\psi$ relative to any (code for a) realizer for $\phi$. Thus, $P^{r_{\phi}}$ will recognize an $r$-realizer for $\psi$, so $\psi$ is $r$-realizable.
        \item $r_{\psi\rightarrow\phi[\frac{x}{y}]}$ is a program $P$ that recognizes a $r$-realizer for $\phi[\frac{x}{y}]$ relative to any $r$-realizer of $\psi$. What we need is a program $Q$ that, given an $r$-realizer for $\psi$, recognizes a realizer for $\forall{x}\phi$. So let an $r$-realizer $r_{\psi}$ for $\psi$ be given. Then $P^{r_{\psi}}$ will recognize an $r$-realizer $r$ for $\phi[\frac{x}{y}]$. By the definition of $r$-realizers for formulas with free variables, $r$ is a realizer or $\forall{x}\phi$.
        \item Let $r$ be an $r$-realizer for $\phi[\frac{x}{y}]\rightarrow\psi$. Since $\phi[\frac{x}{y}]\rightarrow\psi$ contains a free variable, this means that $r\Vdash_{r}\forall{x}(\phi[\frac{x}{y}]\rightarrow\psi)$; that is, $r=(P,\rho)$, where $P$ is an OTM-program and $\rho$ is a parameter such that, for all codes $c$ for a set $b$, $P^{\rho,c}$ recognizes an $r$-realizer for $\phi(b)\rightarrow \psi$. What we need is a program $Q$, along with a parameter $\rho^{\prime}$ such that, for all $r$-realizers $u\Vdash_{r}\exists{x}\phi$, $Q^{u,\rho^{\prime}}$ recognizes an $r$-realizer for $\psi$.

        $Q$ will work as follows: Let an $r$-realizer $(c,t)$ for $\exists{x}\phi$ be given; that is, $c$ is an ordinal code for a set $b$ and $t\Vdash_{r}\phi(b)$. $Q$ will now recognize a pair $(s,s^{\prime})$ such that $s^{\prime}\Vdash_{r}\phi(b)\rightarrow\psi$ and $s\Vdash_{r}\psi$. 

        This is done as follows: By definition, $P^{\rho,c}$ will recognize a realizer $r^{\prime}$ for $\phi(b)\rightarrow\psi$. Let $s^{\prime}=r^{\prime}$. Given a pair $(p,q)$ as our input, we can thereby check the second component. If it turns out that $q\neq r^{\prime}$, we halt with output $0$. Otherwise, $q=r^{\prime}$ will consist of a program $S$ and a parameter $\rho^{\prime\prime}$ such that $S^{\rho^{\prime\prime},v}$ recognizes an $r$-realizer for $\psi$ whenever $v$ is an $r$-realizer for $\phi(b)$. In particular then, $S^{\rho^{\prime\prime},t}$ will recognize an $r$-realizer $v^{\prime}$ for $\psi$. Let $p=v^{\prime}$. Then the procedure just described recognizes $(p,q)=(v^{\prime},r^{\prime})$ in the input $(c,t)\Vdash_{r}\exists{x}\phi$, and we will have $p\Vdash\psi$. Hence the procedure is as desired.              
    \end{enumerate}
In each case, it is easy to see that the method for obtaining the desired $r$-realizer is effective in the given $r$-realizers for the assumptions.
\end{proof}



\end{document}